\numberwithin{equation}{section}
\def\g{{\bar{g}}}
\def\M{{\partial M}}
\def\R{{\bar{R}}}
\def\A{{\bar{A}}}
\def\D{{\bar{D}}}
\def\S{{\bar{S}}}
\def\H{{\bar{H}}}
\def\F{{\bar{F}}}
\newtheorem{prop}{Proposition}[section]
\newtheorem{theo}[prop]{Theorem}
\newtheorem{lemm}[prop]{Lemma}
\newtheorem{rema}[prop]{Remark}
\def\begeq{\begin{equation}}
\def\endeq{\end{equation}}
\begin{document}
\title{Finite Boundary Regularity for Conformally Compact Einstein Manifolds of Dimension 4}
\author{Xiaoshang Jin \thanks{The author’s research is supported by China Postdoctoral Science Foundation (Grant No.2019M650287)}}
\date{}
\maketitle
\begin{abstract}
We prove that a $4-$dimensional $C^2$ conformally compact Einstein manifold with H\"older continuous scalar curvature and with $C^{m,\alpha}$ boundary metric has a $C^{m,\alpha}$ compactification. We also study the regularity of the new structure and the new defining function. This is a supplementary proof of Anderson's work and an improvement of Helliwell's result in dimension 4.
\end{abstract}
\section{Introduction}  \label{sect1}
\par In 1985, Charles Fefferman and Robin Graham \cite{graham1985conformal} introduced a new method to study the local conformal invariants of manifolds. Similar to $n-$ sphere embedded into $n+2-$dimensional Minkowski space, they tried to embed an arbitrary conformal $n-$manifold into an $n+2-$ dimensional Ricci-flat Lorentz manifold, which they called the ambient space. The ambient spaces were used to produce local scalar conformal invariants. An important part of the ambient space construction is the introduction
of conformally compact Einstein metrics for a conformal manifold. The study of conformally compact Einstein metrics could tell us some relationship between the Riemannian structure in the interior and the conformal structure on the boundary. Much progress has been made since then. In recent years, the physics community has also become interested in
conformally compact Einstein metrics because the introduction of AdS/CFT correspondence in the quantum theory of gravity in theoretic physics by Maldacena \cite{maldacena1999large}.
\par Let $M$ be the interior of a compact $(n+1)$-dimensional manifold $\overline{M}$ with non-empty boundary $\M$. We call a complete metric $g^+$ on $M$ is $C^{m,\alpha}$(or $W^{k,p}$) conformally compact if there exits a defining function $\rho$ on $\overline{M}$ such that the conformally equivalent metric $$g=\rho^2g_+$$ can extend to a $C^{m,\alpha}$(or $W^{k,p}$) Riemannian metric on $\overline{M}.$ The defining function is smooth on $\overline{M}$ and satisfies
 \begin{equation}\label{1.1}
 \left\{
    \begin{array}{l}
    \rho>0\ \ in \ M
    \\\rho=0\ \ on\ \M
    \\d\rho\neq 0\ \ on\  \M
    \end{array}
 \right.
 \end{equation}
 Here $C^{m,\alpha}$ and $W^{k,p}$ are usual H\"older space and the Sobolev space. We call the induced metric $h=g|_{\M}$  the boundary metric associated to the compactification $g.$ It is easy to see that different defining function induces different boundary metric and every two of the boundary metrics are conformal equivalent. Then  the conformal class $[h]$ is uniquely determined by $(M,g_+).$ We call $[h]$ the conformal infinity of $g_+.$  If in addition, $g^+$ is Einstein, i.e.
  \begin{equation}\label{1.2}
Ric_{g_+}+ng_+=0,
 \end{equation}
 then we say $(M,g_+)$ is a conformally compact Einstein manifold.
 \par There are some interesting problems concerning conformally compact Einstein metric. Such as the existence problem, see \cite{anderson2008einstein}, \cite{gursky2017non} ,\cite{graham1991einstein}, \cite{gursky2020local}, \cite{kichenassamy2004conjecture}, \cite{lee2006fredholm} etc. The unique problem, see \cite{anderson2003boundary},\cite{chang2018compactness2}. The compactness problem, see \cite{anderson2008einstein},\cite{chang2018compactness1}, \cite{chang2018compactness2}.
 \par In this paper, we deal with the boundary regularity problem. Given a conformally compact Einstein manifold $(M,g^+)$ and a compactification $g=\rho^2g^+,$ if the boundary metric $h$ is  $C^{m,\alpha},$ is there a $C^{m,\alpha}$ compactification of $g^+?$
This problem  was first raised by Fefferman and Graham in 1985 in \cite{graham1985conformal} and they observed that if $dim M=n+1$ is odd, the boundary regularity in general breaks down at the order $n.$ If $dim M=n+1$ is even, the $C^{m,\alpha}$ compactification may exist.
\par In \cite{chrusciel2005boundary}, Chru\'sciel, Delay, Lee and Skinner used the harmonic diffeomorphism at infinity to construct a good structure near boundary where Einstein equation could be written as an elliptic PDE of second order uniformly degenerating at the boundary. That is the so-called 'gauge-broken Einstein equation'. Then they use polyhomogeneity result of some specific degenerate equation to obtain a good result of the boundary regularity. We suggest the readers to see \cite{andersson1996solutions} for more details about these equations. They proved that if the boundary metrics are smooth, the $C^2$ conformally compact Einstein metrics have conformal compactifications that are smooth up to the boundary in the sense of $C^{1,\lambda}$ diffeomorphism in dimension 3 and all even dimensions, and polyhomogeneous smooth in odd dimensions greater than 3. This is certainly a very good result in the sense that they made good use of Einstein equation and gave us a suitable coordinate in infinity to study conformally compact Einstein metrics. I think their method is more geometrical. The condition of that the initial compactification is $C^2$ in all dimension should be sharp. However, their result only hold for smooth case. It is believed that their method could also be used to prove the finite regularity although we may loss half regularity in this situation.
 \par In \cite{anderson2003boundary} and \cite{anderson2008einstein}, M. T. Anderson considered the Bach tensor in dimension 4, and proved the finite regularity result. He only assume that the initial compactification $g$ is $W^{2,p}$ where $p>4.$ I am not sure whether the $W^{2,p}$ condition is good enough to prove his result. As a supplementary  proof, we use Anderson's method to prove his conclusion where we assume that the initial compacification $g$ is $C^2$ and the scalar curvature is $C^\sigma$ for some $\sigma\in(0,1).$
\par In \cite{helliwell2008boundary}, Helliwell solved the issue in all even dimensions by following Anderson's method. He considered the Fefferman-Graham ambient obstruction tensor instead of Bach tensor in higher dimensions. It is conformally invariant and vanishes for Einstein metrics.  Helliwell assumed the initial compactification $g$ is at least in $C^{n,\alpha}$ for a $(n+1)-$ smooth manifold. It means the original compactification is $C^{3,\alpha}$ for a smooth manifold of dimension 4. Now we reduce the condition $C^{3,\alpha}$ to $C^{2,\sigma}$ to improve his result.
\par This is the main result:
\begin{theo}\label{theorem 1.1}
Let $(M,g^+)$ be a conformally compact Einstein manifold of dimension $4$ with a $C^2$ compactification $g=\rho^2g^+.$ If the scalar curvature $S\in C^\sigma(\overline{M})$ for some $\sigma>0,$ the boundary metric $h=g|_{\partial M}\in C^{m,\alpha}(\partial M)$ with $m\geq 2,\alpha\in(0,1),$ then under a $C^{2,\lambda}$ coordinates change, $g^+$ has a $C^{m,\alpha}$ conformally compactification $\tilde{g}=\tilde{\rho}^2g^+$ with the boundary metric $\tilde{g}|_{\M}=h.$
\end{theo}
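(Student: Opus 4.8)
The plan is to follow Anderson's approach, treating the statement as boundary regularity for an elliptic system and exploiting the special role of the Bach tensor in dimension $4$. Since an Einstein metric is real-analytic in harmonic coordinates, the interior of $M$ presents no difficulty and elliptic regularity gives smoothness away from $\M$; the whole issue is confined to a collar $\M\times[0,\epsilon)$. First I would discard the given $\rho$ in favour of the geodesic defining function $t$ determined by the boundary metric $h$, so that $t^2g^+=dt^2+g_t$ is in normal form with $g_t|_{t=0}=h$ and $\partial_t g_t|_{t=0}=0$. In this gauge the Einstein equation $Ric_{g^+}+3g^+=0$ becomes a coupled system for the pair $(g_t,t)$; its trace part is a second-order elliptic equation for the defining function in which the scalar curvature $S$ of the compactification appears as inhomogeneity. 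It is precisely here that the hypothesis $S\in C^\sigma(\overline M)$ is consumed, to produce the first fractional gain from $C^2$ to $C^{2,\sigma}$.

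The decisive ingredient, and the reason the argument closes in dimension $4$, is the Bach tensor $B$. Because $B$ is conformally invariant of weight $-2$ and vanishes identically on Einstein metrics, we have $B_{g^+}=0$ and hence $B_{\bar g}=0$ for the compactification $\bar g=t^2g^+$. In harmonic coordinates for $\bar g$ adapted to $h$-harmonic coordinates on $\M$ --- which exist and have regularity $C^{m+1,\alpha}$ since $h\in C^{m,\alpha}$ --- the equation $B_{\bar g}=0$ is a fourth-order quasilinear elliptic system with principal part $\Delta_{\bar g}^2$ acting on the components of $\bar g$. I would therefore pose the boundary value problem consisting of this fourth-order equation for the conformal class together with the second-order equation for the factor $t$, with boundary data the Dirichlet condition $\bar g|_{\M}=h$ supplemented by the normal data $\partial_t g_t|_{t=0}=0$ forced by the normal form. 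The two equations are genuinely coupled: the conformally invariant Bach equation controls the conformal class but is blind to the factor, which is pinned down only by the scalar-curvature equation.

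The analytic core is then boundary Schauder theory. I would check that the coupled Bach/scalar-curvature system is elliptic in the Agmon--Douglis--Nirenberg sense and that the boundary operators satisfy the complementing (Lopatinski--Shapiro) condition, so that the standard a priori estimates for elliptic boundary value problems apply up to $\M$. Feeding in $h\in C^{m,\alpha}$ and iterating, one raises the regularity of $\bar g$ and $t$ one H\"older step at a time, $C^2\to C^{2,\sigma}\to C^{3,\alpha}\to\cdots\to C^{m,\alpha}$, and the resulting data assemble into the claimed compactification $\tilde g=\tilde\rho^2g^+$. The adapted harmonic coordinate change is only $C^{2,\lambda}$ because that is the regularity the initial $C^2$ hypothesis on $g$ dictates for the first chart, even though $\tilde g$ is $C^{m,\alpha}$ in the new structure.

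I expect the main obstacle to be the tension between the low initial regularity and the order of the equations. Since $g$ is only assumed $C^2$, its curvature is a priori merely continuous, so the fourth-order Bach equation cannot be differentiated at the outset; the bootstrap must be carefully ordered, extracting the first gain from the scalar-curvature hypothesis before the fourth-order estimates can be invoked. A second difficulty is the degeneracy of the conformal factor at $\M$: the singular weights $t^{-1}$ and $t^{-2}$ that appear when the Einstein equation is rewritten for the compactified data must be absorbed so that all estimates are uniform up to the boundary. Finally, verifying the complementing condition for the combined fourth-order/second-order system, rather than for the Bach equation in isolation, is the remaining delicate point.
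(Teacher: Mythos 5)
Your outline diverges from the paper's argument at several points where the divergence is not merely stylistic but would cause the proof to fail. First, the gauge: you propose to work in the geodesic normal form $t^2g^+=dt^2+g_t$ and to bootstrap there. The paper does the opposite --- it replaces $\rho$ by the \emph{Yamabe} (constant scalar curvature) compactification, obtained by solving $\Delta_g u-\frac{n-2}{4(n-1)}Su-\frac{n-2}{4(n-1)}u^{\frac{n+2}{n-2}}=0$ with $u\equiv 1$ on $\M$; this is where the hypothesis $S\in C^\sigma$ is consumed (it makes $u\in C^{2,\sigma}$), not in a trace equation for $t$. The geodesic compactification appears only as an auxiliary device to derive boundary values of $Ric$, because it loses a derivative (for a $C^{3,\alpha}$ compactification the geodesic one is only $C^{2,\alpha}$), so one cannot bootstrap inside that gauge up to $C^{m,\alpha}$. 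Second, and more seriously, you treat $B_{\bar g}=0$ as a fourth-order system with principal part $\Delta^2$ acting on the metric components. As you yourself note, the Bach tensor is conformally invariant and hence blind to the conformal factor, so this fourth-order operator on the full metric is degenerate along conformal directions and cannot be elliptic as a system for $\bar g$; moreover the boundary data you propose ($\bar g|_{\M}=h$ together with $\partial_t g_t|_{t=0}=0$, the latter being automatic for an umbilic boundary with vanishing mean curvature) is far short of what a fourth-order problem requires, and the Lopatinski--Shapiro verification you defer is precisely the point the paper is engineered to avoid. The paper's key structural move is that once $S\equiv -1$, the Bach equation collapses to a \emph{second-order} elliptic equation for the Ricci tensor, $\Delta R_{\alpha\beta}=\Gamma\ast\partial R+\mathcal{Q}$, which is coupled to the second-order equation $\Delta g_{\alpha\beta}=-2R_{\alpha\beta}+Q(g,\partial g)$ in harmonic coordinates; the boundary conditions (Dirichlet for $R_{ij},R_{00}$ computed from $h$ and $H$, a Neumann condition for $R_{0i}$ from the second Bianchi identity, Dirichlet for $g_{ij}$ and an oblique condition for $g_{0\alpha}$ from harmonicity) are exactly matched to these two scalar second-order problems.

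The third gap is the low-regularity start. With $g$ only $C^2$, $Ric$ is merely continuous, so no classical Schauder estimate applies to the Ricci equation, and a fourth-order equation for $g$ does not even make distributional sense without care. The paper's resolution is the observation that $\rho\,Ric=-(n-1)D^2\rho+[\cdot]g\in C^\sigma$, hence $Ric$ lies in the weighted space $H_\sigma^{(1-\sigma)}$, and the first fractional gains $Ric\in C^\sigma$ then $C^{1,\sigma}$ are obtained from the \emph{intermediate} Schauder theory of Gilbarg--H\"ormander and Lieberman (Dirichlet and oblique-derivative versions), applied after subtracting off potentials $u^\theta_{\alpha\beta}$ solving $\Delta u^\theta_{\alpha\beta}=f^\theta_{\alpha\beta}$ to absorb the divergence-form term $\partial_\theta(\Gamma\ast Ric)$. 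Your proposal acknowledges that ``the bootstrap must be carefully ordered'' but does not identify this mechanism, and without it the first step of the iteration cannot be taken. In short: the conclusion you aim for is the right one and the Bach tensor is indeed the decisive dimension-$4$ ingredient, but the formulation as a fourth-order boundary value problem in geodesic gauge is not a viable route to it at this regularity.
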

\begin{rema}
The new coordinates form $C^{m+1,\alpha}$ differential structure of $\overline{M}.$
$\tilde{\rho}$ is a $C^{m+1,\alpha}$  defining function.
\par If $g=\rho^2g^+$ is $C^{2,\sigma},$ then the condition of $S$ in theorem 1.1 holds automatically. Hence the conclusion is also true.
\par If the boundary metric $h$ is smooth, then $g^+$ has a smoothly conformally compactification $\tilde{g}$ with the boundary $\tilde{g}|_{\M}=h.$
\par The condition that 'the scalar curvature $S\in C^\sigma(\overline{M})$' seems unnatural and this is because we choose the Yamabe compactification for the new $\tilde{g}.$ This condition is used to improve the regularity of the new defining function and new compactification for the Yamabe equation with Dirichlet data. I think the condition may be removed if we choose another 'good' compactification.
\end{rema}
It is well known that (see \cite{graham1985conformal}) if $(M,g^+)$ is a $4-$dimensional conformally compact Einstein metric with boundary metric $h$ and  $g=r^2g^+$ is the geodesic compactification associated
with $h,$ then according to the Gauss lemma, $g_+=r^{-2}(dr^2+g_r).$
$$g_r=h+g^{(2)}r^2+g^{(3)}r^3+\cdots$$
where $g^{(2)}$ is the Schouten tensor and is determined by $h.$ $g^{(3)}$ is determined by $g^+$ and $h$ and hence it is a non-local term. The rest of power series is determined by $g^{(3)}$ and $h.$ This property is also true for higher dimension. From this point of view, Helliwell's condition of $C^{3,\alpha}$ initial compactification seems very natural. That we improve it to  $C^{2,\sigma}$ is a big step as we don't need any information about the non-local term.
\par The outline of this paper is as follows. In section \ref{sect2}, we introduce some basic facts about conformally compact Einstein metrics. We show that the Yamabe compactification  exists. The conditions in theorem \ref{theorem 1.1} are unchanged under this compactification. We also consider the Bach equation in dimension 4 and it is an elliptic PDE of second order about Ricci tensor if the scalar curvature is constant. At last, we introduce the harmonic coordinates.
\par In section \ref{sect3}, we deduce some boundary conditions. Including the Dirichlet condition of metric and Ricci curvature, the  Neumann condition of Ricci curvature and the oblique derivative condition of metric. We prove that these conditions are true even if the compactification $g$ is only $C^2.$
\par In section \ref{sect4}, we attempt to prove the main theorem. The first difficulty is $C^\alpha$ and $C^{1,\alpha}$ estimate of Ricci curvature. So we present the intermediate Schauder theory to solve the problem. Then we finish our proof  with the classical Schauder theory. In the end, with the help of Bach equation, we prove the regularity of defining function in the new coordinates.
\section{Preliminaries}  \label{sect2}
Let $(M,g_+)$ be a $n+1-$dimensional conformally compact Einstein manifold and $g=\rho^2g^+$ is a compactification. Then
\begin{equation}\label{2.1}
K_{ab}=\frac{K_{+ab}+|\nabla\rho|^2}{\rho^2}-\frac{1}{\rho}[D^2\rho(e_a,e_a)+D^2\rho(e_b,e_b)],
\end{equation}
\begin{equation}\label{2.2}
Ric=-(n-1)\frac{D^2\rho}{\rho}+[\frac{n(|\nabla\rho|^2-1)}{\rho^2}-\frac{\Delta\rho}{\rho}]g,
\end{equation}
\begin{equation}\label{2.3}
S=-2n\frac{\Delta\rho}{\rho}+n(n+1)\frac{|\nabla\rho|^2-1}{\rho^2}.
\end{equation}
Here $K_{ab},Ric,S$ are the sectional curvature, Ricci curvature and scalar curvature of $g$ and $D^2$ denote the Hessian. (Readers can see \cite{besse2007einstein} for the conformal transformation law of curvatures.)
\par If $g$ is a $C^2$ compactification, then from (\ref{2.3}),$|\nabla\rho|=1$ on $\M.$  Then by (\ref{2.1}) $K_{+ab}$ tends to $-1$ as $\rho\rightarrow 0.$ Hence a $C^2$ conformally compact Einstein manifold is asymptotically hyperbolic.
Let $D^2\rho|_{\M}=A$ denote the second fundamental form of $\M$ in $(\overline{M},g).$
The equation (\ref{2.2}) further implies that $\M$ is umbilic.
\subsection{Constant Scalar Curvature Compactification}
\begin{lemm}
Let $(M,g_+)$ be a conformally compact n-manifold with a $W^{2,p}$ conformal compactification $g=\rho^2g_+$ where $p> n/2.$ Suppose that $h=g|_{\M}$ is the boundary metric. Then there exits a $W^{2,p}$ constant scalar curvature compactification $\hat{g}=\hat{\rho}^2g_+$ with boundary metric $h.$
\end{lemm}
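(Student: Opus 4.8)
The plan is to realize $\hat g$ as a conformal change of $g$ itself. Since both $g$ and $\hat g$ are compactifications of the same $g_+$, they are pointwise conformal, so I write
\[
\hat g = u^{\frac{4}{n-2}}\, g, \qquad u>0 \text{ on } \overline{M},
\]
and correspondingly set $\hat\rho = u^{\frac{2}{n-2}}\rho$, so that $\hat g=\hat\rho^2 g_+$. The requirement that the boundary metric be unchanged, $\hat g|_{\M}=h=g|_{\M}$, forces $u|_{\M}=1$. Under a conformal change the scalar curvature transforms through the conformal Laplacian, so demanding $S_{\hat g}\equiv c$ for a constant $c$ turns the problem into the Dirichlet boundary value problem for the Yamabe equation (for $n\ge 3$)
\[
-\tfrac{4(n-1)}{n-2}\,\Delta_g u + S_g\, u = c\, u^{\frac{n+2}{n-2}}\ \text{ in } M,\qquad u=1 \text{ on } \M,\qquad u>0 .
\]
The crucial choice is to prescribe a \emph{negative} constant, $c=-N<0$; this is the decisive point that makes the whole scheme work.

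Existence I would obtain variationally. The equation above is the Euler--Lagrange equation of
\[
E(u)=\int_M\Big(\tfrac{2(n-1)}{n-2}|\nabla u|^2+\tfrac12 S_g\,u^2+\tfrac{N(n-2)}{2n}\,|u|^{\frac{2n}{n-2}}\Big)\,dV_g
\]
on the closed affine set $\{u:\ u-1\in H^1_0(M)\}$. Because $c<0$, the critical term $|u|^{2n/(n-2)}$ enters with a positive coefficient; it is therefore coercive and, being convex, weakly lower semicontinuous, which is exactly what rescues us from the lack of compactness of $H^1\hookrightarrow L^{2n/(n-2)}$. The only term of indefinite sign is $\int_M S_g u^2$, and here the hypothesis $p>n/2$ is used: by H\"older and Sobolev it is dominated by a strictly subcritical norm $\|u\|_{2p/(p-1)}$ (one checks $2p/(p-1)<2n/(n-2)$ iff $p>n/2$), so it is both a lower-order perturbation for coercivity and weakly continuous along minimizing sequences, the relevant embedding being compact. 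Hence $E$ is coercive and weakly lower semicontinuous on $1+H^1_0$ and attains its infimum at some $u$; replacing $u$ by $|u|$ does not increase $E$, so $u\ge0$, and the strong maximum principle together with $u|_{\M}=1$ gives $u>0$ in $M$.

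It remains to bootstrap the regularity and to read off the conclusion. Starting from the weak solution $u\in H^1$ with the merely $L^p$ coefficient $S_g$ ($p>n/2$), a Brezis--Kato / Moser iteration shows $u\in L^\infty$; then both $S_g u$ and $u^{\frac{n+2}{n-2}}$ lie in $L^p$, and the $W^{2,p}$ Calder\'on--Zygmund estimates for the Dirichlet problem (whose coefficients come from $g\in W^{2,p}\hookrightarrow C^{0,\beta}$) yield $u\in W^{2,p}$. Since $p>n/2$, $W^{2,p}$ is a Banach algebra and is stable under composition with smooth functions of a variable bounded away from $0$; as $u$ is continuous, positive and equal to $1$ on the boundary, both $u^{4/(n-2)}$ and $u^{2/(n-2)}$ belong to $W^{2,p}$. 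Therefore $\hat g=u^{4/(n-2)}g$ is a $W^{2,p}$ compactification, $\hat\rho=u^{2/(n-2)}\rho$ is a $W^{2,p}$ defining function, and $u|_{\M}=1$ guarantees $\hat g|_{\M}=h$. The main obstacle throughout is the criticality of the Yamabe exponent, the source of possible concentration; this is precisely why the negative target $c<0$ is chosen, as it turns the dangerous critical term into a coercive, convex one and rules out any bubbling, leaving only the routine low-regularity bootstrap forced by $S_g\in L^p$.
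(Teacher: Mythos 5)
Your proposal is correct and follows essentially the same route as the paper: the same Dirichlet Yamabe problem with $u\equiv 1$ on $\M$ and a negative target constant, the same variational minimization of the associated energy on the affine set $1+H^1_0$, and the same Brezis--Kato-type bootstrap from $H^1$ to $W^{2,p}$ using $S_g\in L^p$ with $p>n/2$. Your treatment of coercivity of the $\int S_g u^2$ term and the nonnegativity step via $|u|$ are slightly more explicit than the paper's, but the argument is the same.
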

\begin{proof}
We only need to solve a Yamabe problem with Dirichlet data. Let $\hat{g}=u^{\frac{4}{n-2}}g$, then we consider the equation
 \begin{equation}\label{2.4}
 \left\{
    \begin{array}{l}
    \Delta_gu-\frac{n-2}{4(n-1)}Su+\frac{n-2}{4(n-1)}\lambda u^{\frac{n+2}{n-2}}=0
    \\ u>0\ \ in\ \overline{M}
    \\ u\equiv 1\ \ on\  \M
    \end{array}
 \right.
 \end{equation}
 In \cite{li1995yamabe}, Ma Li proved that the equation has a $C^{2,\alpha}$ solution if the metric $g$ is $C^{2,\alpha}$ when $\lambda=-1.$ Now we extend his conclusion in the case that $g\in W^{2,p}$ for some $p> n/2.$
 Let $\lambda=-1,$ and we consider the following functional
  $$I(u)=\frac{1}{2}\int_M(|\nabla u|^2+\frac{n-2}{4(n-1)}Su^2)dv+\frac{n-2}{2n}\int_M\frac{n-2}{4(n-1)}|u|^{\frac{2n}{n-2}}dv$$on
  the set
  $$A=\{u\in H^1(M):u|_{\M}=1\}.$$
  It is coercive and weakly lower semi-continuous. Then $I$ attains its infimum in $A,$ which means that $\exists u\in A,\ I(u)=\inf\limits_{v\in A} I(v).$  Since for any $\eta\in H^1_0(M), t\in\mathbb{R}, u+t\eta\in A,$ we have that
   $$\frac{d}{dt}I(u+t\eta)|_{t=0}=0.$$
   Then $u$ is a $H^1$ weak solution. By the Sobolev embedding theorem it follows that $u\in L^{\frac{2n}{n-2}}.$ Now let $$f(x)=\frac{n-2}{4(n-1)}(S+ u^{\frac{4}{n-2}}),$$ then $u$ is a weak solution of
   $-\Delta_gu+fu=0,\ u|_{\M}=1$ and $f\in L^{\frac{n}{2}}.$ By a standard method in PDE we can infer that $u\in L^q$ for any $q\geq 2.$ (One can see more details in \cite{brezis1978remarks}, theorem 2.3.) So $fu\in L^{p'}$ for any $p'<p$ and it implies that $u\in W^{2,p'}.$ If choose $p'>n/2,$ then $u$ is H\"older continuous. Then $f\in L^p,$ and finally we get that  $u\in W^{2,p}.$ The strong maximum principle tells
  us that $u$ is positive in M.
\end{proof}
\par If $g\in C^2$ and $S_g\in C^{\sigma}$ for some $\sigma>0,$ we know that the equation \ref{2.4} has a $C^{2,\sigma}$ solution $u.$ Then $\hat{g}=u^{\frac{4}{n-2}}g$ is still $C^2$, and the new defining function $\hat{\rho}=u\rho\in C^{2,\sigma}$.
In the following of this section, we don't distinguish $g$ with $\hat{g}.$ When we refer to the compactification $g,$ we mean the scalar curvature of $g$ is $-1$ near the boundary and the defining function is $C^{2,\sigma}.$
\subsection{The Bach Equation}
For a $4-$dimensional manifold, the Bach tensor is a conformal invariant and vanishes for Einstein metric, see \cite{besse2007einstein}. In local coordinates,
\begin{equation}\label{2.5}
B_{ij}=P_{ij,k}^{\ \ \ \ k}-P_{ik,j}^{\ \ \ \ k}-P^{kl}W_{kijl}
\end{equation}
where  $P_{ij}=\frac{1}{2}R_{ij}-\frac{S}{12}g_{ij}$ is the Schouten tensor.
\par Let $\{y^\beta\}_{\beta=0}^3$ be the smooth structure on $\overline{M}$ and when restricted on $\M,$ $\{y^i\}_{i=1}^3$ is smooth
structure of $\M.$ From above we can assume that $g\in C^\infty(M)\cap C^2(\overline{M}), S_g\equiv -1.$ Then the fact that $g_+$ is
Einstein and (\ref{2.5}) imply that
\begin{equation}\label{2.6}
\Delta Ric_{\alpha\beta}=\Gamma\ast\partial Ric+\mathcal{Q}
\end{equation}
in y-coordinates. Here $\Delta=g^{\alpha\beta}\partial_\alpha\partial_\beta,$ $\Gamma$ is the Christoffel symbol of $g,$ $\Gamma\ast\partial Ric$ denote the bilinear form of $\Gamma$ and $\partial Ric$ and $\mathcal{Q}$ denotes a quadratic curvature term.
\subsection{The Harmonic Coordinates Near Boundary}
In the rest of the paper, if there are no special instructions, any use of indices will
follow the convention that Roman indices will range from 1 to n, while Greek indices range from 0 to n.
\par We call the coordinates $\{x^\beta\}_{\beta=0}^n$ harmonic coordinates with respect to $g$ if $$\Delta_gx^\beta=0$$
 for $0\leq\beta\leq n.$ We are now going to construct harmonic coordinates in a neighbourhood of $\M$   if $g$ is smooth.
\par In fact, if $g\in C^{1,\alpha},\alpha\in(0,1)$ for any point $p\in\M,$ there is a neighbourhood $V$ and smooth structure $\{y^\beta\}_{\beta=0}^n$ where $y^0|_{\M}=0.$ Then by solving a local Dirichlet problem:
\begin{equation}   \label{2.7}
\left\{\begin{array}
     {l}
    \Delta_g x^\beta=0 \ in \ V
    \\x^\beta|_{V\cap\M}=y^\beta|_{V\cap\M},
    \end{array}\right.
\end{equation}
there is a $C^{2,\alpha}$ solution by \cite{gilbarg2015elliptic} and we have the Schauder estimate:
$$\parallel x^\beta-y^\beta\parallel_{C^{2,\alpha}(V)}\leq C(\parallel \Delta(x^\beta-y^\beta)\parallel)_{C^\alpha(V)}+\parallel x^\beta-y^\beta\parallel_{C^{2,\alpha}(\partial V)}=C\parallel\Delta y \parallel_{C^\alpha(V)} $$
We can assume that the y-coordinates is the normal coordinates at $p,$ then $\Delta y(p)=0.$ Hence if $V$ is small enough,$\parallel x^\beta-y^\beta\parallel_{C^{2,\alpha}(V)}$ tends to $0.$ $\{x^\beta\}_{\beta=0}^n,0\leq\beta\leq n$ is a coordinate around $p.$
\par In particular, if $g\in C^2,$ then the solution $x\in C^{2,\alpha}(y)$ for any $\alpha\in(0,1).$ Hence
$$g_{\alpha\beta}=g(\frac{\partial}{\partial x^\alpha},\frac{\partial}{\partial x^\beta})\in C^{1,\alpha}(\overline{M})$$
In harmonic coordinates $\{x^\beta\}_{\beta=0}^n$, the Ricci tensor could be written as:
$$\Delta g_{ij}=-2R_{ij}+Q(g,\partial g)$$
where $Q(g,\partial g)$ is a polynomial of $g$ and $\partial g.$ For more details, one can see \cite{deturck1981some}.
\par Here we refer to the special coordinates constructed in section 4 in \cite{gicquaud2013conformal}. Instead of the harmonic coordinates above, those coordinates may also be useful in our situation, and may also help us to deal with it in higher dimension of even number. That's an interesting problem.
\section{The Boundary Conditions}\label{sect3}
\par In this section, we derive a boundary problem for $g$ and Ricci curvature of a conformal compact Einstein manifold in the harmonic coordinates as defined in section 2. We do it locally, that is, for any $p\in \M,$ there is a neighborhood $V$ contains $p$ and a local harmonic chart $\{x^\beta\}.$  Let $D=V\cap \M$ be the boundary portion and let $g\in C^2(V)$ be the Yamabe compactification. We will give the Dirichlet and Neumann boundary conditions of $g$ and $Ric(g)$ on D. Here we state that the boundary conditions in this section hold for all dimension.
\par In fact, as it is showed in \cite{helliwell2008boundary} and \cite{jin2019boundary} that, if $g$ is $C^{3,\alpha}$ compact, we have following boundary conditions:
\begin{prop}
Let $(M,g^+)$ be a $n+1$-dimensional conformally compact Einstein manifold with a $C^{3,\alpha}$ Yamabe compactification $g=\rho^2g^+.$ $g|_{\M}=h$ is the boundary metric. Suppose that $\{x^\beta\}_{\beta=0}^n$ are any coordinates near the boundary such that $x_0$ is defining function and $\{x^i\}_{i=0}^n$
 are coordinates of $\M.$ We have:
\begin{equation}\label{3.1}
g_{ij}=h_{ij}.
\end{equation}
\begin{equation}\label{3.2}
R_{ij}=\frac{n-1}{n-2}(Ric_h)_{ij}+(\frac{1}{2n}S-\frac{1}{2(n-2)}S_h)h_{ij}+\frac{n-1}{2n^2}H^2h_{ij}.
\end{equation}
\begin{equation}\label{3.3}
R_{0i}=-(g^{00})^{-\frac{1}{2}}\frac{n-1}{n}\frac{\partial H}{\partial x_i}-\frac{g^{0j}}{g^{00}}R_{ij}.
\end{equation}
\begin{equation}\label{3.4}
R_{00}=\frac{1}{(g^{00})^2}(g^{0i}g^{0j}R_{ij}+g^{00}(\frac{1}{2}(S-S_h)-\frac{n-1}{2n}H^2)).
\end{equation}
\begin{equation}\label{3.5}
N(R_{0i})=(g^{00})^{-\frac{1}{2}}(-g^{j\beta}\partial_\beta R_{ji}+g^{\eta\beta}\Gamma_{i\beta}^\tau R_{\eta\tau})
\end{equation}
where $N=\frac{\nabla x_0}{|\nabla x_0|}=(g^{00})^{-\frac{1}{2}}g^{0\beta}\partial_\beta$ is the unit norm vector on $\M$ and $R_{\alpha\beta},S,H$ are Ricci curvature, scalar curvature mean curvature respect to $g.$
\end{prop}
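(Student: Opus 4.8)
The plan is to work locally near a fixed $p\in\partial M$ in the harmonic chart $\{x^\beta\}$ of Section \ref{sect2}, and to read off each identity from a combination of the Gauss--Codazzi equations for the (umbilic) boundary, the conformal-to-Einstein relations \eqref{2.2}--\eqref{2.3}, and the contracted second Bianchi identity. Condition \eqref{3.1} is immediate: since $\{x^i\}$ are coordinates on $\partial M$, the vectors $\partial_i$ are tangent to $\partial M$, so $g(\partial_i,\partial_j)|_{\partial M}$ is by definition the induced metric $h_{ij}$. The substance is in \eqref{3.2}--\eqref{3.5}, which I would organize as three ``Dirichlet'' relations for the boundary values $R_{ij},R_{0i},R_{00}$ and one ``Neumann'' relation for $N(R_{0i})$.

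For the Dirichlet relations I would use the facts recorded in Section \ref{sect2}: a $C^2$ (here $C^{3,\alpha}$) compactification has $|\nabla\rho|=1$ on $\partial M$, so the unit normal is $N=\nabla\rho$, and $\partial M$ is umbilic with second fundamental form $A=\tfrac{H}{n}h$. With this, the contracted Gauss equation expresses the tangential ambient Ricci as $R_{ij}=(Ric_h)_{ij}+R_{i0j0}+\tfrac{n-1}{n^2}H^2h_{ij}$, and its trace, combined with the scalar relation \eqref{2.3}, yields the normal component $R(N,N)=\tfrac12(S-S_h)-\tfrac{n-1}{2n}H^2$. What remains is the radial curvature $R_{i0j0}$, which Gauss--Codazzi leave undetermined; this is the one place the Einstein condition must be used at second order. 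Carrying the expansion of \eqref{2.2} one step further at $\rho=0$ (equivalently, identifying the order-$r^2$ coefficient of the Fefferman--Graham expansion with the Schouten tensor of $h$) gives $R_{i0j0}|_{\partial M}$ equal to $\tfrac{1}{n-2}\bigl((Ric_h)_{ij}-\tfrac{S_h}{2(n-1)}h_{ij}\bigr)$ plus explicit multiples of $Sh_{ij}$ and $H^2h_{ij}$. Substituting turns the coefficient of $(Ric_h)_{ij}$ into $1+\tfrac{1}{n-2}=\tfrac{n-1}{n-2}$ and produces the $-\tfrac{1}{2(n-2)}S_h\,h_{ij}$, $\tfrac{1}{2n}S\,h_{ij}$ and $\tfrac{n-1}{2n^2}H^2h_{ij}$ terms of \eqref{3.2} exactly, while \eqref{3.4} is the normal relation above re-expressed in coordinates.

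The coordinate weights in \eqref{3.3}--\eqref{3.4} arise because in the Yamabe/harmonic gauge $\partial_0$ is not the unit normal: since $N=(g^{00})^{-1/2}g^{0\beta}\partial_\beta$ one has $\partial_0=(g^{00})^{-1/2}N-(g^{00})^{-1}g^{0j}\partial_j$. For \eqref{3.3} I would start from the contracted Codazzi equation, which for an umbilic boundary gives $R(N,\partial_i)=-\tfrac{n-1}{n}\partial_iH$ (the divergence of $A=\tfrac{H}{n}h$ minus $dH$); inserting the decomposition of $\partial_0$ into $R_{0i}=R(\partial_0,\partial_i)$ yields the $(g^{00})^{-1/2}\partial_iH$ term together with the correction $-\tfrac{g^{0j}}{g^{00}}R_{ij}$. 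Applying the same decomposition to $R(N,N)$ and using \eqref{3.3} converts the normal relation into \eqref{3.4}, the $(g^{00})$-weights and the $g^{0i}g^{0j}R_{ij}$ term appearing automatically.

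Finally, \eqref{3.5} is a Neumann condition coming from the contracted second Bianchi identity $g^{\alpha\beta}\nabla_\alpha R_{\beta i}=\tfrac12\partial_iS$. Because the Yamabe compactification has $S\equiv-1$ near $\partial M$, the right-hand side vanishes, and expanding the divergence and isolating $N(R_{0i})=(g^{00})^{-1/2}g^{0\beta}\partial_\beta R_{0i}$ gives \eqref{3.5}; here the term $g^{\alpha\beta}\Gamma^\gamma_{\alpha\beta}R_{\gamma i}$ drops out precisely because $g^{\alpha\beta}\Gamma^\gamma_{\alpha\beta}=-\Delta_g x^\gamma=0$ in harmonic coordinates, which is the reason for working in this chart. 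I expect the main obstacle to be the determination of $R_{i0j0}$ in the second paragraph: Gauss--Codazzi underdetermine the split of the ambient curvature into its tangential and radial parts, and closing it requires the second-order content of the Einstein equation hidden in the singular factor $1/\rho$ of \eqref{2.2}. Under the present $C^{3,\alpha}$ hypothesis this is classical---all curvature tensors are $C^\alpha$ and the Taylor expansion of \eqref{2.2} at $\partial M$ is valid pointwise---so once $R_{i0j0}$ is pinned down the remaining identities follow by the algebraic manipulations above; the work of the later sections is exactly to recover these same relations when $g$ is only $C^2$.
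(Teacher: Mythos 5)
Your skeleton overlaps substantially with the paper's: both use the Gauss--Codazzi equations for the umbilic boundary, the trace identities for the normal component, the decomposition $\partial_0=(g^{00})^{-1/2}N-(g^{00})^{-1}g^{0j}\partial_j$, and the contracted second Bianchi identity with $S$ constant for \eqref{3.5}. The divergence --- and the genuine gap --- is at the step you yourself flag as the main obstacle: pinning down $R_{i0j0}|_{\M}$. You assert that ``carrying the expansion of \eqref{2.2} one step further at $\rho=0$'' yields $R_{i0j0}$ in terms of the Schouten tensor of $h$ plus explicit multiples of $Sh_{ij}$ and $H^2h_{ij}$, but you never produce those multiples, and the mechanism you invoke does not apply as stated: the identification of the second-order coefficient of the expansion with the Schouten tensor of $h$ is a statement in the \emph{geodesic} gauge $\g=r^2g^+$ (where $|\bar\nabla r|\equiv 1$ and the boundary is totally geodesic), whereas \eqref{2.2} is written for the Yamabe defining function $\rho$, for which $|\nabla\rho|^2=1+b\rho$ with $b\neq 0$ in general and the boundary is only umbilic with $H\neq 0$. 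Expanding \eqref{2.2} directly in $\rho$ therefore requires controlling $b$ and the third normal derivatives of $\rho$, which your sketch does not do. The paper avoids this by an explicit two-stage argument: it first invokes Lee's lemma to produce the $C^{2,\alpha}$ geodesic compactification with the same boundary metric, computes $\S$ and $\R_{ij}$ on $\M$ there (Lemma \ref{lemma 3.2}, using $\R_{irrj}=-\partial_r\A_{ij}+\A^2_{ij}$ together with the Einstein equation in the form \eqref{3.14}--\eqref{3.16}), and then transfers back to $g=u^{-2}\g$ with $u=r/\rho$, $u|_{\M}=1$ (Lemmas \ref{lemma 3.3} and \ref{lemma 3.4}). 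It is precisely the boundary Hessian and Laplacian of $u$ (via $u_r=(\H-H)/n$) that generate every $H$, $H^2$ and $S-\S$ term in \eqref{3.2}--\eqref{3.4}; without this bookkeeping your claim that the coefficients come out ``exactly'' is unsubstantiated.

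Two smaller points. First, in your derivation of \eqref{3.4}: substituting \eqref{3.3} into $R_{00}=(g^{00})^{-1}R(N,N)-2(g^{00})^{-3/2}g^{0j}R(N,\partial_j)+(g^{00})^{-2}g^{0i}g^{0j}R_{ij}$ leaves a residual term proportional to $g^{0j}\partial_jH$ that your two-line argument does not account for; this needs to be addressed (or the coordinate gauge restricted) before \eqref{3.4} can be claimed in the stated form. Second, your observation that the term $g^{\alpha\beta}\Gamma^\gamma_{\alpha\beta}R_{\gamma i}$ in the expanded Bianchi identity vanishes only in harmonic coordinates is correct and is actually a point on which you are more careful than the statement of the proposition, which is phrased for arbitrary coordinates; this part of your argument is fine.
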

The formula (3.1) is trivial and (3.5) is deduced by the second Bianchi identity and the fact that the scalar curvature is constant near the boundary. Here we briefly recall the proof of The formula (3.2), (3.3) and (3.4). For a $C^{3,\alpha}$  conformally compact Einstein metric, there is a unique $C^{2,\alpha}$  geodesic compactification with the same boundary metric (lemma 5.1 in \cite{lee1994spectrum}). Then for such a $C^2$ geodesic compactification, we have a good formula for Ricci curvature and scalar curvature on the boundary. At last, we use the Ricci formula under conformal change to get  (3.2), (3.3) and (3.4).
\par In this section, we will show that the formula (3.2), (3.3) and (3.4) still hold for $C^2$  conformally compact Einstein metric.

\par In fact, if $g$ is $C^2$ conformally compact, then there exists a sequence of $C^{3,\alpha}(\overline{M})$ metrics $g_k$ which converge to $g$ in $C^2$ norm in smooth structure of $\overline{M}.$ However $g_k$ are not conformal Einstein in general. In the following, we omit the index $k$ and  assume that $g$ is a $C^{3,\alpha}$ metric on $\overline{M}.$ By choosing a defining function $\rho$ satisfying $|\nabla\rho|_g=1$ on $\M,$ we make $g^+=\rho^{-2}g.$ Then with Taylor theorem, there is a $C^{2,\alpha}$ function $b$ such that $|\nabla\rho| ^2=1+b\rho$ near the boundary.
\begin{equation}\label{3.6}
Ric=-(n-1)\frac{D^2\rho}{\rho}+[\frac{n(|\nabla\rho|^2-1)}{\rho^2}-\frac{\Delta\rho}{\rho}]g+\frac{F}{\rho},
\end{equation}
\begin{equation}\label{3.7}
S=-2n\frac{\Delta\rho}{\rho}+n(n+1)\frac{|\nabla\rho|^2-1}{\rho^2}+\frac{trF}{\rho},
\end{equation}
where $F=\rho(Ric_{g_+}+ng_+)=\rho Ric_g+(n-1)D^2\rho-(nb-\Delta\rho)g\in C^{1,\alpha}(\overline{M}).$
\\ Now we prove the following formulas:
$$R_{0i}=-(g^{00})^{-\frac{1}{2}}\frac{n-1}{n}\frac{\partial H}{\partial x_i}-\frac{g^{0j}}{g^{00}}R_{ij}+Q(F,DF,h,Dg,H),$$
$$R_{00}=\frac{1}{(g^{00})^2}(g^{0i}g^{0j}R_{ij}+g^{00}(\frac{1}{2}(S-S_h)-\frac{n-1}{2n}H^2))+Q(F,DF,h,Dg,H),$$
\begin{equation}\label{3.8}
R_{ij=}\frac{n-1}{n-2}(Ric_h)_{ij}+(\frac{1}{2n}S-\frac{1}{2(n-2)}S_h)h_{ij}+\frac{n-1}{2n^2}H^2h_{ij}+Q(F,DF,h,Dg,H).
\end{equation}
Here $h=g|_{\M},$ $H$
is the mean curvature, $Q$ is a polynomial and $Q(F,DF,h,Dg,H)=0$ if $F=DF=0$ on $\M.$
We will use three lemmas to prove (\ref{3.8}).
\par First, there is a unique $C^{2,\alpha}$ geodesic compactification of $g^+$ with boundary metric $h$ and denote it by $\g=r^2g^+.$ Let $\bar{g}=u^2g$ where $u=\frac{r}{\rho}$ satisfying that $\equiv 1$ on the boundary and $u\in C^{2,\alpha}.$
Then $\F=r(Ric_{g_+}+ng_+)=uF$ is still $C^{1,\alpha}(\overline{M}).$ We will calculate the boundary curvature  of $\g$ and notice that the second fundamental form of $\g$ at $\M$ is not 0, but determined by the tensor $\F.$
\begin{lemm}\label{lemma 3.2}
Suppose that $\g=r^2g_+$ is a $C^2$ conformally compactification of manifold $(M,g_+)$ with boundary metric $h.$ Then on the boundary $\M,$
\begin{equation}\label{3.9}
\S=\frac{n}{n-1}(S_h)+Q(\F,D\F),
\end{equation}
\begin{equation}\label{3.10}
\R_{ij}=\frac{n-1}{n-2}(Ric_h)_{ij}-\frac{1}{2(n-1)(n-2)}S_hh_{ij}+Q(\F,D\F,h,D\g).
\end{equation}
Here $\S$ and $\R_{ij}$ are  the scalar curvature and Ricci curvature of $\g.$ $Q$ is a polynomial satisfying  $Q(\F,D\F,h,D\g)=0$ if $\F=D\F=0.$
\end{lemm}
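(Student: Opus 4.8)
The plan is to compute in the geodesic gauge and to extract the finite boundary values of $\S$ and $\R_{ij}$ from the singular conformal identities by a first-order Taylor expansion in the geodesic defining function $r$. By the Gauss lemma I write $\g = dr^2 + g_r$ on a collar of $\M$, where $g_r$ is the metric induced on the level set $\{r=\text{const}\}$ and $g_0 = h$. Specializing the conformal identities (\ref{3.6})--(\ref{3.7}) to $r$, for which $|\nabla r|_{\g}\equiv 1$ and hence $b=0$, the terms carrying $|\nabla r|^2-1$ and $b$ drop out, leaving the purely singular expressions
\begin{equation*}
\S=\frac{-2n\,\Delta r+\mathrm{tr}\,\F}{r},\qquad \R_{ij}=\frac{-(n-1)D^2r_{ij}-\Delta r\,(g_r)_{ij}+\F_{ij}}{r},
\end{equation*}
together with $\R_{00}=(-\Delta r+\F_{00})/r$ and $\R_{0i}=\F_{0i}/r$, where I have used $D^2r(\partial_r,\cdot)=0$. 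Here $D^2r_{ij}=\tfrac12\partial_r(g_r)_{ij}$ is the second fundamental form of the level sets, restricting at $r=0$ to $A:=D^2r|_\M$, and $\Delta r$ is their mean curvature, restricting to $H$.

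Since $\g\in C^2$ its curvature is continuous up to $\M$ and $\F\in C^{1,\alpha}$, so each numerator above is a $C^1$ function of $r$ along a normal geodesic whose value at $r=0$ must vanish for the quotient to remain finite. Reading off these order-$r^0$ relations first, I obtain $\F_{0i}|_\M=0$, $\F_{00}|_\M=H$, and
\begin{equation*}
(n-1)A_{ij}=\F_{ij}|_\M-H\,h_{ij},
\end{equation*}
whose trace gives $H=\tfrac{1}{2n}\,\mathrm{tr}\,\F|_\M$. Thus the whole second fundamental form of $\M$ in $(\overline{M},\g)$ is determined by $\F|_\M$, and every $A$- or $H$-term appearing below is a quantity that vanishes when $\F=0$; such terms will be collected into $Q$.

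Next I compute the order-$r^1$ coefficients, which are the boundary curvatures themselves: $\S|_\M=-2n\,\partial_r H|_\M+Q(\F,D\F)$ and, using $\partial_r D^2r_{ij}=\tfrac12\partial_r^2(g_r)_{ij}$,
\begin{equation*}
\R_{ij}|_\M=-\tfrac{n-1}{2}\,\partial_r^2(g_r)_{ij}|_\M-\partial_r H|_\M\,h_{ij}-2H A_{ij}+\partial_r\F_{ij}|_\M.
\end{equation*}
I eliminate the radial second derivative with the Gauss equation, which expresses the ambient tangential Ricci $\R_{ij}$ through the intrinsic Ricci of $g_r$ (equal at $r=0$ to $(Ric_h)_{ij}$), the radial term $\R_{0i0j}|_\M=-\tfrac12\partial_r^2(g_r)_{ij}|_\M+(A^2)_{ij}$, and quadratic second-fundamental-form terms. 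Combining the two expressions for $\R_{ij}$ yields
\begin{equation*}
\R_{ij}|_\M=\frac{n-1}{n-2}(Ric_h)_{ij}+\frac{1}{n-2}\,\partial_r H|_\M\,h_{ij}+Q(\F,D\F,h,D\g).
\end{equation*}
It remains to fix the single scalar $\partial_r H|_\M$. For this I equate the direct formula $\S|_\M=-2n\,\partial_r H|_\M+Q$ with the trace decomposition $\S|_\M=\R_{00}|_\M+h^{ij}\R_{ij}|_\M$, in which $\R_{00}|_\M=-\partial_r H|_\M+Q$; the resulting linear equation solves to $\partial_r H|_\M=-\tfrac{S_h}{2(n-1)}+Q$. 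Substituting back gives exactly (\ref{3.9}) and (\ref{3.10}).

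The main obstacle is that the obvious radial route to $\partial_r H|_\M$ is degenerate: the traced Riccati equation $\partial_r H|_\M+|A|^2+\R_{00}|_\M=0$ merely reproduces the constraint $\partial_r\F_{00}|_\M=-|A|^2$ instead of determining $\partial_r H|_\M$. One is therefore forced to pin $\partial_r H|_\M$ down indirectly, through the consistency of the two expressions for $\S|_\M$ above, and the real labour is the bookkeeping in this elimination: separating the genuinely intrinsic pieces $Ric_h$ and $S_h$ from everything proportional to $\F$, $D\F$, or the second fundamental form, and checking that the latter collection indeed vanishes when $\F=D\F=0$. A secondary but essential point is regularity: since $\g$ is only $C^2$ and $\F$ only $C^{1,\alpha}$, only first-order Taylor expansions in $r$ are available, which is precisely why $Q$ may depend on $D\F$ but on no higher derivative of $\F$, and why every identity must be read off pointwise at $r=0$ rather than as a power series.
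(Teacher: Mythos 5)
Your proof is correct and follows essentially the same route as the paper's: geodesic gauge, vanishing of the numerators of the singular conformal identities at $r=0$ to determine the second fundamental form and $\H$ from $\F$, a L'Hopital-type differentiation to read off the boundary curvatures, and Gauss--Codazzi to eliminate $\partial_r^2 g_r$. The only cosmetic difference is that you pin down $\partial_r\H$ by equating $\S=\R_{rr}+h^{ij}\R_{ij}$ with the direct formula for $\S$, whereas the paper combines the traced Gauss equation (3.12) with (3.16) --- the same linear elimination performed in a different order.
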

\begin{proof}
Let us choose the coordinates $(r,y^1,\cdots,y^n),$ near $\M$ such that $\g=dr^2+g_r,$ i.e.
$$g_{ri}=g^{ri}=0, g_{rr}=g^{rr}=1.$$
According to Gauss Codazzi equation,
\begin{equation}\label{3.11}
\begin{aligned}
\R_{ij}&=\g^{\alpha\beta}\R_{i\alpha\beta j}\\
       &=\g^{kl}((R_h)_{iklj}+\A_{il}\A_{kj}-\A_{ij}\A_{kl})+\R_{irrj}\\
       &=(R_h)_{ij}+\g^{kl}\A_{il}\A_{kj}+\H\A_{ij}+\R_{irrj}.
\end{aligned}
\end{equation}
Taking trace with respect to $i$ and $j$,
\begin{equation}\label{3.12}
\R_{rr}=\frac{1}{2}(\S-S_h+\H^2-\g^{ij}\g^{kl}\A_{il}\A_{kj}).
\end{equation}
Then
\begin{equation}\label{3.13}
\begin{aligned}
\R_{irrj}&=\g(\bar{\nabla}_{\partial_i}\bar{\nabla}_{\partial_r}\partial_r,\partial_j)-\g(\bar{\nabla}_{\partial_r}\bar{\nabla}_{\partial_i}\partial_r,\partial_j)-
              \g(\bar{\nabla}_{[\partial_r,\partial_i]}\partial_r,\partial_j)\\
         &=-\partial_r\g(\bar{\nabla}_{\partial_i}\partial_r,\partial_j)+\g(\bar{\nabla}_{\partial_i}\partial_r,\bar{\nabla}_{\partial_r}\partial_j)\\
         &=-\partial_r\A_{ij}+\A^2(\partial_i,\partial_j).
\end{aligned}
\end{equation}
From (\ref{3.6}) and (\ref{3.7}) ,we have:
\begin{equation}\label{3.14}
\begin{aligned}
    &\R_{ij}=-(n-1)\frac{\A_{ij}}{r}-\frac{\bar{\Delta}r}{r}\g_{ij}+\frac{\F_{ij}}{r},
    \\
    &\R_{ri}=\frac{\F_{ri}}{r},
    \\
    &\R_{rr}=-\frac{\bar{\Delta}r}{r}+\frac{\F_{rr}}{r},
    \\
    &\S=-2n\frac{\bar{\Delta}r}{r}+\frac{tr\F}{r}.
    \end{aligned}
\end{equation}
$\R$ic and $\S$ is continuous on $\overline{M},$ so on $\M \ (r=0)$ we have:
\begin{equation}\label{3.15}
\begin{aligned}
    &\A_{ij}=\frac{1}{n-1}(\F_{rr}h_{ij}-\F_{ij}),\\
    &\H=\bar{\Delta}r=\F_{rr}=\frac{1}{2n}tr\F.
\end{aligned}
\end{equation}
Hence
\begin{equation}\label{3.16}
\begin{aligned}
    &\R_{ij}=-(n-1)\partial_r\A_{ij}-\partial_r\bar{\Delta}r\g_{ij}-\bar{\Delta}r\partial_r\g_{ij}+\partial_r\F_{ij},\\
    &\R_{ri}=\partial_r\F_{ri},\\
    &\R_{rr}=-\partial_r\bar{\Delta}r+\partial_r\F_{rr},\\
    &\S=-2n\partial_r\bar{\Delta}r+\partial_rtr\F.
\end{aligned}
\end{equation}
Combining all the formulas above, we get that
\begin{equation}\label{3.17}
\begin{aligned}
\S&=\frac{n}{n-1}(S_h-\H^2+|\A|^2_h-\frac{1}{n}\partial_rtr\F)\\
  &=\frac{n}{n-1}(S_h-\F_{rr}^2+\frac{1}{(n-1)^2}(n\F_{rr}^2+\F_{rr}tr_h\F+|\F|_h^2)-\frac{1}{n}\partial_rtr\F)
\end{aligned}
\end{equation}
which is (\ref{3.9}).
\begin{equation}\label{3.18}
\begin{aligned}
\R_{ij}&=\frac{n-1}{n-2}((R_h)_{ij}+\g^{kl}\A_{il}\A_{kj}+\H\A_{ij})-\F_{rr}\partial_r\g_{ij}+\partial_r\F_{ij}\\
  &+\frac{1}{n-2}(\A^2_{ij}-\frac{1}{2}(\S-S_h+\H^2-|A|^2)+\partial_r\F_{rr})h_{ij}
\end{aligned}
\end{equation}
Noticing that $\A_{ij}$ is totally determined by $\F$ and $h,$ hence (\ref{3.10}) holds.
\end{proof}

\begin{lemm}\label{lemma 3.3}
Let $g=\rho^2g_+$ be a $C^{3,\alpha}$ conformally compact metric of $(M,g_+)$ and $\g=r^2g_+$ be a $C^{2,\alpha}$ geodesic compactification with the same boundary metric $g|_{\M}=\g|_{\M}=h.$ Let $r=u\rho, A=D^2\rho,$ then  $A|_{\M}=\A-u_rh.$
\end{lemm}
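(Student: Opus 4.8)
The plan is to exploit the fact that the two compactifications are conformally related with conformal factor equal to $1$ on the boundary, so that the statement reduces to the standard transformation law of the second fundamental form under a conformal change of metric. Since $r=u\rho$, we have $\g=r^2g_+=u^2g$, where $u=r/\rho\in C^{2,\alpha}(\overline M)$ and, because $g$ and $\g$ induce the same boundary metric $h$, $u\equiv 1$ on $\M$. Recall that $A=D^2\rho|_{\M}$ is precisely the second fundamental form of $\M$ in $(\overline M,g)$: since $|\nabla\rho|_g=1$ on $\M$, the inward unit $g$-normal is $N=\nabla\rho$, and for $X,Y$ tangent to $\M$ one has $D^2\rho(X,Y)=g(\nabla_X N,Y)$. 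Likewise $\A$ is the second fundamental form of $\M$ in $(\overline M,\g)$, computed with the $\g$-unit normal $\partial_r$ in the geodesic coordinates $\g=dr^2+g_r$ of Lemma \ref{lemma 3.2}.

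First I would write $u=e^{\phi}$ and record the conformal relation between the two Levi-Civita connections,
$$\bar{\nabla}_X Y=\nabla_X Y+d\phi(X)Y+d\phi(Y)X-g(X,Y)\,\mathrm{grad}_g\phi.$$
The $\g$-unit normal is $\bar N=u^{-1}N$, so for $X,Y$ tangent to $\M$ a direct substitution (using $g(X,N)=0$ and cancelling the two $d\phi(X)N$ terms) gives $\bar{\nabla}_X\bar N=e^{-\phi}\bigl(\nabla_X N+(N\phi)X\bigr)$, whence
$$\A(X,Y)=\g(\bar{\nabla}_X\bar N,Y)=e^{\phi}\bigl(A(X,Y)+(N\phi)\,g(X,Y)\bigr)$$
on a neighbourhood of $\M$ where both forms are defined.

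Finally I would restrict to $\M$. There $u=1$, so $\phi=0$ and the prefactor $e^{\phi}$ equals $1$; moreover, since $u\equiv 1$ on $\M$ we have $\bar N=u^{-1}N=N$ there, and because $\g$ is geodesic this common unit normal is exactly $\partial_r$. Hence $N\phi=\partial_r(\log u)|_{\M}=u_r$ (using $u=1$), and $g|_{\M}=h$, so the displayed identity collapses to $\A=A+u_r h$ on $\M$, that is $A|_{\M}=\A-u_r h$, as claimed. Regularity is not an issue here, as $g\in C^{3,\alpha}$, $\g\in C^{2,\alpha}$ and $u\in C^{2,\alpha}$ make every object above at least continuous up to $\M$.

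The computation is essentially bookkeeping; the only points that require care, and the places where I would slow down, are: (i) checking that the two unit normals coincide on $\M$ so that $N\phi$ may be read off as the geodesic-normal derivative $u_r$; and (ii) keeping the sign conventions of $A$ and $\A$ consistent with those fixed in the Preliminaries and in Lemma \ref{lemma 3.2}, so that the transformation law produces $-u_r h$ rather than $+u_r h$.
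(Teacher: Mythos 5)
Your proof is correct and takes essentially the same route as the paper's: both arguments reduce to the conformal change of the Levi--Civita connection under $\bar g=u^2g$ together with $u\equiv 1$ on $\partial M$, the paper carrying this out in geodesic coordinates via the Christoffel symbol $\Gamma^r_{ij}$ and the identity $A_{ij}=-\Gamma^r_{ij}\,g(\nabla\rho,\partial_r)$, while you phrase the same computation invariantly as the transformation law $\bar A=e^{\phi}\bigl(A+(N\phi)g\bigr)$ of the second fundamental form. Your sign conventions agree with the paper's ($\bar A_{ij}=\bar g(\bar\nabla_{\partial_i}\partial_r,\partial_j)=-\bar\Gamma^r_{ij}$), so the conclusion $A|_{\partial M}=\bar A-u_rh$ comes out correctly.
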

\begin{proof}
In the local coordinates $(r,y^1,y^2,\ldots,y^n)$ near $\M,$ $\A_{ij}=-\bar{\Gamma}_{ij}^r.$ Then the relationship between the connection $\nabla$ of $g$ and $\bar{\nabla}$ of $\g$ is:
$$\Gamma_{ij}^r=\bar{\Gamma}_{ij}^r-\frac{1}{u}(\delta^r_ju_i+\delta^r_iu_j-g_{ij}u_r)=\frac{1}{u}u_rh_{ij}.$$
$g=u^{-2}\g, grad_g=u^2 grad_{\g},$ then
\begin{equation}\label{3.19}
\begin{aligned}
    A_{ij}&=D^2\rho(\partial_i,\partial_j)=g(\nabla_{\partial_i}\nabla\rho,\partial_j)=-g(\nabla\rho,\nabla_{\partial_i}\partial_j)\\
          &=-\Gamma_{ij}^rg(\nabla\rho,\partial_r)=-\Gamma_{ij}^r\g(\bar{\nabla}\rho,\partial_r)\\
          &=-\Gamma_{ij}^r\g(\bar{\nabla}(\frac{r}{u}),\partial_r)=-\Gamma_{ij}^r\g(\frac{u\bar{\nabla}r-r\bar{\nabla}u}{u^2},\partial_r)\\
          &=-\Gamma_{ij}^r\g(\bar{\nabla}r,\bar{\nabla}r)=\A_{ij}-u_rh_{ij}
\end{aligned}
\end{equation}
\end{proof}
Lemma 3.3 tells us that $u_r=\frac{\H-H}{n}.$ Using the fact that $u|_{\M}\equiv 1,$
$$\bar{\nabla}u=\frac{\H-H}{n}\bar{\nabla}r.$$
\begin{lemm}\label{lemma 3.4}
Suppose that $g,\g$ are defined as in lemma \ref{lemma 3.3}, then on the boundary $\M,$
$$R_{ri}=\frac{n-1}{n}\frac{\partial(\H-H)}{\partial x_i}+Q(\F,D\F,H),$$
$$R_{rr}=\frac{1}{2}(S-S_h)-\frac{n-1}{2n}H^2+Q(F,DF,H),$$
\begin{equation}\label{3.20}
R_{ij}=\R_{ij}+(\frac{1}{2n}(S-\S))h_{ij}+\frac{n-1}{2n^2}H^2h_{ij}+Q(\F,D\F,H).
\end{equation}
Here $Q(\F,D\F,H)=0$ if $F=DF=0.$
\end{lemm}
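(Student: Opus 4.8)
The plan is to read off the boundary Ricci curvature of $g$ from that of the geodesic compactification $\g$ through the conformal change $g=u^{-2}\g$, in exactly the spirit in which the scalar identity was used inside Lemma \ref{lemma 3.2}. Since we are in the $C^{3,\alpha}$ situation, $u=r/\rho\in C^{2,\alpha}$ and every Hessian below exists classically. I would start from the conformal transformation law: writing $g=e^{2\varphi}\g$ with $\varphi=-\log u$,
\begin{equation*}
Ric_g=Ric_{\g}+(n-1)\frac{\bar{\nabla}^2u}{u}+\Big(\frac{\bar{\Delta}u}{u}-n\frac{|\bar{\nabla}u|^2}{u^2}\Big)\g ,
\end{equation*}
and I would record its trace against $g^{\alpha\beta}=u^2\g^{\alpha\beta}$,
\begin{equation*}
S=u^2\S+2nu\,\bar{\Delta}u-n(n+1)|\bar{\nabla}u|^2 ,
\end{equation*}
which one checks is consistent with (\ref{2.3}). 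Both identities are then restricted to $\M$, where $u\equiv1$.

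Next I would compute the three types of components in the geodesic chart $(r,y^1,\dots,y^n)$ of $\g$, in which $\g_{rr}=1$, $\g_{ri}=0$ and $\A_{ij}=-\bar{\Gamma}^r_{ij}$. Because $u\equiv1$ along $\M$, all tangential derivatives of $u$ vanish there, so $\bar{\nabla}u=u_r\partial_r$ and, by Lemma \ref{lemma 3.3}, $u_r=\frac{\H-H}{n}$. A short Christoffel computation then gives, on $\M$,
\begin{equation*}
(\bar{\nabla}^2u)_{ij}=\A_{ij}u_r,\qquad (\bar{\nabla}^2u)_{ri}=\partial_iu_r=\tfrac1n\partial_i(\H-H),\qquad (\bar{\nabla}^2u)_{rr}=u_{rr},
\end{equation*}
whence $\bar{\Delta}u=u_{rr}+\H u_r$ and $|\bar{\nabla}u|^2=u_r^2$. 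Inserting the expression for $(\bar{\nabla}^2u)_{ri}$ into the $ri$-component of the Ricci law, together with $\R_{ri}=\partial_r\F_{ri}$ from (\ref{3.16}), immediately produces the asserted formula for $R_{ri}$: the term $\frac{n-1}{n}\partial_i(\H-H)$ appears with the correct coefficient and everything else is a multiple of $\F$ or $D\F$.

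The one quantity above that is not a boundary datum is the second normal derivative $u_{rr}$, and removing it is the crux of the argument. I would eliminate it with the traced (scalar) identity: solving $S=\S+2n\,\bar{\Delta}u-n(n+1)u_r^2$ for $u_{rr}$ yields
\begin{equation*}
u_{rr}=\frac{S-\S}{2n}+\frac{n+1}{2}u_r^2-\H u_r .
\end{equation*}
Feeding this into the $ij$- and $rr$-components cancels the $u_{rr}$ and $\bar{\Delta}u$ contributions and leaves, after using $u_r=\frac{\H-H}{n}$, only the leading terms $\frac{1}{2n}(S-\S)h_{ij}$ and $\frac12(S-S_h)$, an explicit multiple of $H^2$, and quantities built from $\A,\H,\F$. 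Finally I would invoke Lemma \ref{lemma 3.2} to trade $\R_{ij}$, $\R_{rr}$ and $\S$ for $Ric_h$, $S_h$ and the intrinsic curvature of $h$ — in particular using (\ref{3.12}) to write $\R_{rr}=\frac12(\S-S_h)+Q(\F)$ so that the $\S$'s cancel and $\frac12(S-\S)$ becomes $\frac12(S-S_h)$ — and then collect every term carrying a factor of $\F$ or $D\F$ (including the cross terms $\A_{ij}u_r$ and $\H u_r$, all of which vanish when $\F=D\F=0$) into the remainder $Q$. Since $\F=uF$, the hypothesis $F=DF=0$ on $\M$ forces $\F=D\F=0$ there, so $Q$ vanishes in that case. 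The main obstacle is thus not the elimination of $u_{rr}$ itself but the ensuing bookkeeping: one must track precisely which $\A$- and $\H$-terms are genuinely $\F$-controlled, so that the clean formula (\ref{3.20}) and its companions survive with only the explicit $S$, $S_h$, $H$ dependence.
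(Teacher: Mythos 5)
Your proposal follows essentially the same route as the paper's own proof: pass to the conformal change $g=u^{-2}\g$ from the geodesic compactification, compute the boundary Hessian of $u$ using $u|_{\M}\equiv 1$ and $u_r=\frac{\H-H}{n}$ from Lemma \ref{lemma 3.3}, eliminate the normal second derivative (equivalently $\bar{\Delta}u$) through the traced scalar identity, and then invoke Lemma \ref{lemma 3.2} and (\ref{3.12}) to convert the barred quantities into $S_h$, $Ric_h$ and $\F$-controlled remainders. The one point of divergence is the sign of the $n|\bar{\nabla}u|^2/u^2$ term in the conformal transformation law --- you use the standard convention while the paper's display carries the opposite sign --- and since this term is exactly what produces the $H^2$ coefficient after the elimination step, you should carry your version through explicitly and confirm which sign of $\frac{n-1}{2n}H^2$ it yields before asserting agreement with the stated formulas.
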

\begin{proof}
Let $g=u^{-2}\g,$ then
$$Ric=\R ic+(n-1)\frac{\bar{D}^2u}{u}+(\frac{\bar{\Delta}u}{u}+\frac{n|\bar{\nabla}u|^2_{\g}}{u^2})\g.$$
We also know that
$$\bar{\Delta}u=div\bar{\nabla}u=div(\frac{\H-H}{n}\bar{\nabla}r)=\frac{\H-H}{n}\bar{\Delta}r+\frac{\partial_r(\H-H)}{n},$$
$$\D^2u(\partial_i,\partial_j)=\g(\bar{\nabla}_{\partial_i}\bar{\nabla}u,\partial_j)=\frac{\H-H}{n}\A_{ij},$$
$$\D^2u(\partial_i,\partial_r)=u_{ir}=\frac{1}{n}\frac{\partial(\H-H)}{\partial x_i},$$
$$\D^2u(\partial_r,\partial_r)=\frac{\partial_r(\H-H)}{n}=\bar{\Delta}u-\frac{\H-H}{n}\bar{\Delta}r.$$
 Then on $\M,$ we conclude that
$$R_{ri}=\R_{ri}+\frac{n-1}{n}\frac{\partial(\H-H)}{\partial x_i}=\partial_r\F_{ri}+\frac{n-1}{n}\frac{\partial(\H-H)}{\partial x_i},$$
$$R_{rr}=\R_{rr}+n\bar{\Delta}u-\frac{(n-1)(\H-H)}{n}\bar{\Delta}r+\frac{(\H-H)^2}{n},$$
\begin{equation}\label{3.21}
R_{ij}=\R_{ij}+(\bar{\Delta}u+\frac{(\H-H)^2}{n})h_{ij}+(n-1)\frac{\H-H}{n}\A_{ij}.
\end{equation}
Taking trace with respect to $i$ and $j,$
$$S=\S+2n\bar{\Delta}u+\frac{n+1}{n}(\H-H)^2-\frac{(n-1)(\H-H)}{n}\bar{\Delta}r+\frac{n-1}{n}(\H-H)\H.$$
Then
\begin{equation}\label{3.22}
    \bar{\Delta}u=\frac{1}{2n}(S-\S-\frac{n+1}{n}(\H-H)^2+\frac{(n-1)(\H-H)}{n}\bar{\Delta}r-\frac{n-1}{n}(\H-H)\H).
\end{equation}
The result follows from (\ref{3.21}) and (\ref{3.22}).
\end{proof}
In the end, lemma \ref{lemma 3.2} and lemma \ref{lemma 3.4} imply (\ref{3.8}).
\par With the preparation above, let's consider a $C^2$ conformally compact Einstein metric $g=\rho^2g^+$ on $(\overline{M},y).$ We can choose a sequence of $C^{3,\alpha}$ metric $g_k$ which converge to $g$ in $C^2(\overline{M})$ norm. Let $\rho_k=\frac{\rho} {|\nabla^{g_k}\rho|_{g_k}},$ so $\rho_k\in C^{3,\alpha}(\overline{M})$ and $|\nabla^{g_k}\rho_k|_{g_k}\equiv 1$ on $\M.$ Let $g^+_k=(\rho_k)^{-2}g_k,$ then $g_k$ is a $C^{3,\alpha}$ conformally compactification of $(M,g^+_k)$ with defining function $\rho_k.$ Defining $F_k=\rho_k(Ric_{g^+_k}+ng^+_k)$ as above, then the formula of $Ric_{g_k}$ on $\M$ is like (\ref{3.10}) and $F_k$ converge to 0 in $C^1(\M)$ norm.
\\ Finally, as the Ricci curvature of $g_k$ converges to that of $g$ uniformly, we conclude that (\ref{3.7}), (\ref{3.19}) and (\ref{3.20}) hold.
\subsection{Other Boundary Conditions}
 \par We see that if the metric $g$ in lemma \ref{lemma 3.3} is conformally Einstein, then $\A=0$ on $\M$ and the boundary is umbilic. This conclusion is also true even if $g$ is $C^2$ compact and in this case the geodesic compactification is at least $C^1.$ (See \cite{lee1994spectrum}.) Then we have
$$A_{ij}=\frac{H}{n}h_{ij}.$$
Taking the derivative of the equation above along $\M,$
$$\partial_kA_{ij}=\frac{\partial_kH}{n}h_{ij}+\frac{H}{n}\partial_kh_{ij}.$$
Combining it with (\ref{3.3}), we get that
\begin{equation}\label{3.23}
\partial_kA_{ij}=-\frac{1}{n-1}(g^{00})^{\frac{1}{2}}(R_{0k}+\frac{g^{0j}}{g^{00}}R_{ij})
\end{equation}
Technically, this is not a boundary condition because both sides are of the second derivative of $g.$ However, this plays an important role in proving the regularity and we will use the condition later.
\\ \par If we choose harmonic coordinates, we also have the following boundary condition:
\begin{equation}\label{3.24}
g^{\eta\beta}\partial_\eta (g_{\alpha\beta}-\frac{1}{2}\partial_\alpha g_{\eta\beta})=0
\end{equation}
This is just the local expression of $\Delta_gx^\alpha=0.$
\section{Proof of the Main Theorem}\label{sect4}
We prove the main theorem in this section with the Bach equation in harmonic coordinates and some boundary conditions in last section. Firstly, let's recall some intermediate Schauder theory of elliptic PDE in \cite{gilbarg1980intermediate},\cite{lieberman1986intermediate}, i.e. $C^\alpha$ and $C^{1,\alpha}$ estimate.
\subsection{Intermediate Schauder Estimate}

 Suppse $\Omega$ is a bounded convex domain in $\mathds{R}^n$ and $a$ is a positive number satisfying  $a=k+\beta$ ($k\in\mathds{N}, \beta\in(0,1]$)
Defining
$$|u|_a=\sum\limits_{|\alpha|\leq k}|D^\alpha u|_0+\sum\limits_{|\alpha|=k}\sup\limits_{x,y\in \Omega}\frac{|D^\alpha u(x)-D^\alpha u(u)|}{|x-y|^\beta}.$$
Let $H_a(\Omega)$ denote the H\"older space of functions with finite norm $|u|_a$ on $\Omega,$ i.e. $H_a(\Omega)=C^ {k,\beta}(\overline{\Omega})$. Setting
$$\Omega_\delta=\{x\in\Omega|dist(x,\partial\Omega)>\delta\}$$
Let $b$ be a number satisfying $a+b\geq 0$ and define
$$|u|_{a,\Omega}^{(b)}=\sup\limits_{\delta>0}\delta^{a+b}|u|_{a,\Omega_\delta}$$
Let $H_a^{(b)}(\Omega)$ denote the space of functions $u$ in $H_a(\Omega_\delta),(\forall \delta>0)$ such that $|u|_{a,\Omega}^{(b)}$ is finite.
Let $H_a^{(b-0)}(\Omega)$ be the space of functions $u$ in $H_a^{(b)}(\Omega)$ such that if $\delta\rightarrow 0,$ then $\delta^{a+b}|u|_{a,\Omega_\delta}\rightarrow 0$.
\par Basic properties: (the following constant $C$ depends on $a,b,\Omega.$)
\begin{itemize}
\item[1.] $H_a^{(-a)}(\Omega)=H_a(\Omega)=C^{k,\beta}(\overline{\Omega})$. Noticing that if $a$ is positive integer, $H_a(\Omega)=C^{a-1,1}(\overline{\Omega});$
\item[2.] If $b\geq b'$, then $|u|_{a,\Omega}^{(b)}\leq C|u|_{a,\Omega}^{(b')};$
\item[3.] If $0\leq a'\leq a,a'+b\geq 0$ and $b$ is not a non-positive integer, then $|u|_{a',\Omega}^{(b)}\leq C|u|_{a,\Omega}^{(b)}$;
\item[4.] If $0\leq c_j\leq a+b, a\geq 0,j=1,2,$ then
$$|uv|_{a}^{(b)}\leq C(|u|_{a}^{(b-c_1)}|v|_{0}^{(c_1)}+|u|_{0}^{(c_2)}|v|_{a}^{(b-c_2)})$$
Specially, if $u$ and $v$ are continuous functions (bounded), then $|uv|_{a}^{(b)}\leq C(|u|_{a}^{(b)}+|v|_{a}^{(b)})$.
\end{itemize}
With the preparation above, we could state the intermediate Schauder theory. Assuming that $\Omega$ is a bounded $C^\gamma$ domain where $\gamma\geq 1$ and $a,b$ are not integer satisfying
$$0<b\leq a, \ \ a>2, \ \ b\leq\gamma$$
Let
$$P=\sum\limits_{|\alpha|\leq 2}p_\alpha(x)D^\alpha$$
be the elliptic differential operator of second order on $\overline{\Omega}$  where
$$p_\alpha\in H_{a-2}^{(2-b)}(\Omega), \ \ if \ |\alpha|\leq 2$$
$$p_\alpha\in H_0(\Omega),\ \ if \ |\alpha|= 2$$
$$p_\alpha\in H_{a-2}^{(2-|\alpha|-0)}(\Omega), \ \ if \ b<|\alpha|.$$
Then we have:
\begin{lemm}\label{lemma 4.1}[Theorem 6.1 in \cite{gilbarg1980intermediate}]
Let $P,a,b$ be defined as above. If $p_0\leq 0$ and the principal part of $P$ is positive, then the
Dirichlet problem
$$Pu=f \ \ in \ \Omega, \ \ \ u=u_0 \ \ on \ \partial\Omega$$
has a unique solution $u\in H_{a}^{(-b)}(\Omega)$  for every $f\in H_{a-2}^{(2-b)}(\Omega)$ and $u_0\in H_b(\partial\Omega,$ and we
have
$$u_{a}^{(-b)}(\Omega)\leq C(|u|_{b,\partial\Omega}+|f|_{a-2}^{(2-b)}(\Omega))$$
\end{lemm}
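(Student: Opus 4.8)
The plan is to follow the method of continuity, reducing solvability to a single \emph{a priori} estimate of the form
$$|u|_a^{(-b)} \le C\big(|u|_{b,\partial\Omega} + |Pu|_{a-2}^{(2-b)}\big),$$
uniform over a one-parameter family of operators joining $P$ to a model operator for which the Dirichlet problem is already solvable. Concretely, I would set $P_t = (1-t)\Delta + tP$ for $t\in[0,1]$ and check that each $P_t$ inherits the structural hypotheses of Lemma \ref{lemma 4.1} — ellipticity with positive principal part, $p_0\le 0$, and coefficients in the prescribed weighted spaces — with constants controlled independently of $t$ (ellipticity and $p_0\le 0$ are preserved under convex combination, and the coefficients of $\Delta$ lie trivially in all the relevant weighted spaces). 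Granting the a priori estimate together with solvability for $\Delta$ in these weighted spaces, the set of $t$ for which $P_t\colon H_a^{(-b)}(\Omega)\to H_{a-2}^{(2-b)}(\Omega)\times H_b(\partial\Omega)$ is onto is both open and closed, hence equals $[0,1]$; evaluating at $t=1$ gives the theorem. Uniqueness is immediate from the maximum principle, since $p_0\le 0$ and the principal part is positive.

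The heart of the matter is therefore the weighted interior Schauder estimate, and I would exploit the scale-invariance built into the exponent $a+b$. For a point $x_0$ at distance $d=\operatorname{dist}(x_0,\partial\Omega)$, I work on the ball $B_{d/2}(x_0)\subset\Omega$, rescale to the unit ball by $y=(x-x_0)/d$, and apply the classical interior Schauder estimate to the rescaled equation. The coefficient hypotheses $p_\alpha\in H_{a-2}^{(2-b)}(\Omega)$, together with the finer conditions for $|\alpha|>b$, are designed precisely so that after rescaling the transformed coefficients have classical Hölder norms bounded independently of $d$; the weight $\delta^{a+b}$ appearing in $|\cdot|_{a,\Omega}^{(b)}$ then renders the rescaled estimate scale-covariant, and taking the supremum over the exhaustion $\{\Omega_\delta\}$ yields the global weighted bound. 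The lower-order terms are controlled by the product rule (property 4) together with interpolation inequalities for the weighted norms, which split $|p_\alpha D^\alpha u|$ into a small multiple of the top weighted norm, absorbed on the left, plus a large multiple of a lower one, dominated by $\sup|u|$ and hence by the maximum principle. Near the boundary I would flatten $\partial\Omega$ in half-ball charts — legitimate since $\Omega$ is $C^\gamma$ with $b\le\gamma$ — extend the data $u_0\in H_b(\partial\Omega)$ into $\Omega$, and run the analogous boundary Schauder estimates, the condition $b\le\gamma$ guaranteeing that the chart maps preserve the weighted spaces.

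I expect the main obstacle to be the weighted-norm bookkeeping rather than any single deep inequality: verifying that every coefficient–derivative product $p_\alpha D^\alpha u$ actually lands in $H_{a-2}^{(2-b)}(\Omega)$, which is exactly what forces the split hypotheses on $p_\alpha$ according to whether $|\alpha|$ is smaller or larger than $b$, and establishing the scale-invariant interpolation inequalities with constants uniform in $\delta$, so that both the absorption step and the method of continuity proceed with a single constant $C$ depending only on $a,b,\Omega$. Once these technical points are settled the argument becomes a routine, if lengthy, adaptation of the standard Schauder machinery, and for the complete details I would refer to \cite{gilbarg1980intermediate}.
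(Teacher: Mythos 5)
The paper offers no proof of this lemma: it is quoted verbatim as Theorem 6.1 of \cite{gilbarg1980intermediate}, so there is no internal argument to compare yours against. Your outline --- a scale-covariant interior Schauder estimate obtained by rescaling balls $B_{d/2}(x_0)$ to unit size, boundary estimates in flattened charts with the data extended inward, absorption of lower-order terms via weighted interpolation, and the method of continuity from $\Delta$ to $P$ with uniqueness from the maximum principle --- is a faithful road map of how that reference actually proceeds, and no step of it would fail. The one caution is that the genuinely technical content (the interpolation inequalities with constants uniform in $\delta$, the extension of $u_0\in H_b(\partial\Omega)$ to a function in $H_a^{(-b)}(\Omega)$ rather than merely $H_b(\overline{\Omega})$, and the barrier estimates on a domain that is only $C^\gamma$ with $b\le\gamma$) is precisely what you defer back to \cite{gilbarg1980intermediate}, so as written this is an accurate sketch rather than a self-contained proof.
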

We also have the following regularity result:
\begin{lemm}\label{lemma 4.2}[Theorem 6.3 in \cite{gilbarg1980intermediate}]
Let $\Omega,P,a,b$ satisfy the hypotheses in lemma 4.1, and let $u\in C^0(\overline{\Omega})\cap C^2(\Omega),u|_{\partial\Omega}\in H_b(\partial\Omega),Pu\in  H_{a-2}^{(2-b)}(\Omega).$ Then it follows that $u\in H_{a}^{(-b)}(\Omega).$
\end{lemm}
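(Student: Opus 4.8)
The plan is to derive this regularity statement from the existence-and-estimate result of Lemma \ref{lemma 4.1} by the classical \emph{existence implies regularity} device, whose essential ingredient is a uniqueness statement supplied by the maximum principle. To begin, set $f=Pu$ and $u_0=u|_{\partial\Omega}$. By hypothesis $f\in H_{a-2}^{(2-b)}(\Omega)$ and $u_0\in H_b(\partial\Omega)$, so these are exactly the admissible data for the Dirichlet problem solved in Lemma \ref{lemma 4.1}. Applying that lemma produces a function $w\in H_a^{(-b)}(\Omega)$ with $Pw=f$ in $\Omega$ and $w=u_0$ on $\partial\Omega$, together with the bound
$$|w|_a^{(-b)}(\Omega)\leq C\bl|u_0|_{b,\partial\Omega}+|f|_{a-2}^{(2-b)}(\Omega)\br.$$
Since $w$ already lies in the target space $H_a^{(-b)}(\Omega)$, and $w\in C^2(\Omega)$ because $a>2$ gives $H_a(\Omega_\delta)\subset C^2(\Omega_\delta)$ for every $\delta$, the whole problem reduces to proving $u=w$.

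Set $v=u-w$. Then $Pv=Pu-Pw=0$ in $\Omega$, and $v\in C^2(\Omega)$. To control $v$ on the boundary I would invoke the interpolation inequality in Item 3 of the basic properties, taking $a'=b$: this is legitimate because $0<b\leq a$, $b$ is not an integer, and $b+(-b)=0\geq 0$, and it yields $|w|_b^{(-b)}\leq C|w|_a^{(-b)}$. By Item 1, $H_b^{(-b)}(\Omega)=H_b(\Omega)=C^{\lfloor b\rfloor,\,b-\lfloor b\rfloor}(\overline{\Omega})$, so $w$ is H\"older continuous up to $\partial\Omega$ and attains the boundary trace $u_0$ in the classical pointwise sense. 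As $u\in C^0(\overline{\Omega})$ has the same boundary values, we get $v\in C^0(\overline{\Omega})\cap C^2(\Omega)$ with $v=0$ on $\partial\Omega$.

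The decisive step is then to conclude $v\equiv 0$ from $Pv=0$ and $v|_{\partial\Omega}=0$. Because $p_0\leq 0$ and the principal part of $P$ is positive, the weak maximum principle applies in principle; the one point that requires care is that the lower-order coefficients of $P$ may blow up near $\partial\Omega$, so the maximum principle cannot be applied on $\Omega$ all at once. I would instead apply it on the interior exhaustion $\Omega_\delta$, where the coefficients are bounded and $P$ is uniformly elliptic, obtaining $\sup_{\Omega_\delta}v\leq\sup_{\partial\Omega_\delta}v^+$ and the analogous lower bound for $-v$, and then let $\delta\to 0$, using the uniform continuity of $v$ on $\overline{\Omega}$ together with $v|_{\partial\Omega}=0$ to annihilate the right-hand side. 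This forces $v\equiv 0$, hence $u=w\in H_a^{(-b)}(\Omega)$. I expect this uniqueness step, and specifically the boundary matching that makes $v$ a legitimate competitor for the maximum principle, to be the main obstacle; it is precisely the reduction to the classical space $C^b(\overline{\Omega})$ via the interpolation property that removes it.
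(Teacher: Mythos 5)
The paper offers no proof of this lemma at all: it is imported verbatim as Theorem 6.3 of \cite{gilbarg1980intermediate}, so there is nothing internal to compare your argument against. Taken on its own terms, your derivation from Lemma \ref{lemma 4.1} is correct, and it is essentially the standard \emph{existence implies regularity} reduction. The two points that genuinely need care are both handled: (i) the uniqueness clause of Lemma \ref{lemma 4.1} holds only within $H_a^{(-b)}(\Omega)$, so it cannot be applied to $u$ directly, which is why you must prove $u=w$ by hand rather than just invoking uniqueness; and (ii) to make $v=u-w$ a legitimate competitor for the boundary comparison you need $w\in C^0(\overline{\Omega})$ with classically attained boundary values, which you correctly extract from interpolation property 3 with weight $-b$ (admissible since $b$ is not an integer and $b+(-b)=0\geq 0$) together with $H_b^{(-b)}(\Omega)=H_b(\Omega)$. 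The maximum-principle step on the exhaustion $\Omega_\delta$ followed by $\delta\to 0$ is sound: the lower-order coefficients, though possibly unbounded near $\partial\Omega$, lie in $H_{a-2}(\Omega_\delta)$ for each fixed $\delta$, the principal part is uniformly elliptic since it is continuous and positive on the compact $\overline{\Omega}$, and the uniform continuity of $v$ on $\overline{\Omega}$ with $v|_{\partial\Omega}=0$ kills the boundary term in the limit. What your argument buys over the paper's bare citation is a self-contained deduction of the regularity statement from the existence statement already assumed in Lemma \ref{lemma 4.1}; the cost is nothing beyond the classical weak maximum principle.
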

For the boundary oblique derivative conditions, we have the following lemma:
\begin{lemm}\label{lemma 4.3}[Theorem 3 in \cite{lieberman1986intermediate}]
Let $a,b$ be non-integer and $1<b\leq a,a>2$ and $\Omega\subset\mathds{R}^n$ be bounded domain with $H_b$ boundary. Let
$$P=\sum\limits_{|\alpha|\leq 2}p_\alpha(x)D^\alpha \ \ in \ \Omega, \ \ \ M=\sum\limits_{|\alpha|\leq 1}m_\alpha(x)D^\alpha \ \ on \ \partial\Omega.$$
Here
$$\sum\limits_{|\alpha|=2}p_\alpha\xi^\alpha\geq c|\xi|^2 \ \forall \xi\in\mathds{R}^n,\ \ \ \sum\limits_{|\alpha|=1}m_\alpha v^\alpha>0$$
 where $c$ is a positive number. We also let
$$p_\alpha\in H_{a-2}^{(2-b)}(\Omega), \ \ if \ \ |\alpha|\leq 2;\ \ \ m_\alpha\in H_{b-1}(\partial\Omega) \ \ if \ \ |\alpha|\leq 1,$$
$$p_\alpha\in H_{a-2}^{(0-0)} \ if \  |\alpha|=2 \ and \ b<2.$$
(a) If $p_0\leq 0, \ m_0<0,$ then the oblique derivative problem
\begin{equation}\label{4.1}
Pu=f \ \ in \ \Omega, \ \ \ Mu=g \ \ on \ \partial\Omega
\end{equation}
has a unique solution $u\in H_{a}^{(-b)}(\Omega)$ for every $f\in H_{a-2}^{(2-b)}(\Omega)$ and $g\in H_{b-1}(\partial\Omega).$ Moreover,
$$u_{a}^{(-b)}(\Omega)\leq C(|g|_{b-1,\partial\Omega}+|f|_{a-2}^{(2-b)}(\Omega)).$$
(b) If $u\in C^0(\overline{\Omega})\cap C^2(\Omega)$ is a solution of (\ref{4.1}) with $f\in H_{a-2}^{(2-b)}(\Omega)$,$g\in H_{b-1}(\partial\Omega)$ and the directional derivative
$\sum\limits_{|\alpha|=1}m_\alpha$ exists at each point of $\partial\Omega,$ then $u\in H_{a}^{(-b)}(\Omega).$
\end{lemm}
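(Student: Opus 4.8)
The statement is Theorem 3 of \cite{lieberman1986intermediate}, so in the body of the paper it is invoked rather than proved; here I sketch how I would reconstruct it. The natural route for an oblique-derivative existence and regularity theorem in the weighted H\"older scale $H_a^{(b)}$ is the classical two-step architecture: an a priori estimate followed by the method of continuity, with uniqueness supplied separately by the maximum principle. Concretely, for part (a) the plan is to (i) prove the a priori bound $|u|_{a,\Omega}^{(-b)}\le C(|g|_{b-1,\partial\Omega}+|f|_{a-2,\Omega}^{(2-b)})$ for every solution $u\in H_a^{(-b)}(\Omega)$; (ii) solve the constant-coefficient model problem $\Delta u=f$ in $\Omega$, $\partial_\nu u-u=g$ on $\partial\Omega$ (whose boundary operator already satisfies obliqueness and $m_0=-1<0$); (iii) deform $(P,M)$ to this model pair and run the method of continuity; and (iv) deduce uniqueness from $p_0\le 0$ and $m_0<0$.

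The heart of the matter is the a priori estimate, which I would assemble from two localized pieces glued together by the weight. Away from $\partial\Omega$, the interior intermediate Schauder theory already underlying Lemma \ref{lemma 4.1} (from \cite{gilbarg1980intermediate}) controls $\delta^{a-b}|u|_{a,\Omega_\delta}$ in terms of $|f|_{a-2,\Omega}^{(2-b)}$ and a lower-order norm of $u$ on a slightly larger interior subdomain; note the source weight exponent $(a-2)+(2-b)$ equals the target exponent $a-b$, which is why the estimate is clean. Near $\partial\Omega$ one works in boundary half-balls: freeze the top-order coefficients $p_\alpha$ at a boundary point, estimate the resulting constant-coefficient oblique problem — here the obliqueness $\sum_{|\alpha|=1}m_\alpha v^\alpha>0$ is exactly what renders the boundary operator coercive and yields a boundary Schauder bound — and then absorb the frozen-coefficient error. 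Because the top-order coefficients are controlled only in $H_{a-2}^{(2-b)}$ (and merely in the little space $H_{a-2}^{(0-0)}$ when $b<2$), they are continuous, so their oscillation over a small half-ball is small and the perturbation term is absorbed into the principal estimate; the products created in this step are handled by the multiplicative inequality in Property~4 above, which is tuned precisely to the exponents $a-2$ and $2-b$ appearing in the coefficient hypotheses. Summing the interior and boundary contributions against the weight $\delta^{a-b}$ produces the global weighted bound.

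With the a priori estimate in hand, part (a) closes by the method of continuity applied to $P_t=(1-t)\Delta+tP$ and $M_t=(1-t)(\partial_\nu-\mathrm{id})+tM$: each pair is uniformly elliptic and oblique and preserves the sign conditions $p_0\le 0$, $m_0<0$, so the uniform estimate forces the set of $t\in[0,1]$ for which $u\mapsto(P_tu,M_tu)$ is an isomorphism onto the data space to be both open and closed; it is nonempty since $t=0$ is the solvable model, hence equals $[0,1]$ and gives solvability at $t=1$. Uniqueness follows because a homogeneous solution ($f=0$, $g=0$) cannot attain a positive interior maximum by $p_0\le 0$, and at a boundary maximum the Hopf lemma together with $m_0<0$ forces it to vanish. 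For part (b) I would bootstrap rather than re-solve: given $u\in C^0(\overline{\Omega})\cap C^2(\Omega)$ with the stated data, apply the interior estimate on each $\Omega_\delta$ and the near-boundary estimate on boundary half-balls — the hypothesis that $\sum_{|\alpha|=1}m_\alpha D^\alpha u$ exists on $\partial\Omega$ guarantees $Mu=g$ holds pointwise — thereby obtaining finiteness of $|u|_{a,\Omega}^{(-b)}$ directly; alternatively, approximate $f,g$ by smooth data, solve by part (a), and pass to the limit using uniqueness and the estimate.

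The step I expect to be the genuine obstacle is the boundary Schauder estimate for the oblique problem in the weighted intermediate norms, and within it the borderline regime $b<2$. There the principal coefficients are only required to lie in the little-H\"older space $H_{a-2}^{(0-0)}$, so the freezing argument must be run with \emph{vanishing}, rather than merely small, oscillation, and one must interpolate carefully to keep the constant in the estimate independent of $\delta$. Matching the boundary-data norm $H_{b-1}(\partial\Omega)$ to the interior weight exponents when $\partial\Omega$ is only $H_b$-regular, and verifying that the regular oblique boundary condition yields the same gain of regularity as the Dirichlet condition of Lemma \ref{lemma 4.1}, is the delicate bookkeeping that makes this theorem genuinely harder than its Dirichlet analogue; once the model boundary estimate is in place, everything else is a routine adaptation.
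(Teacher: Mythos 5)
The paper offers no proof of this lemma: it is imported verbatim as Theorem 3 of Lieberman's \emph{Intermediate Schauder theory}, so there is nothing internal to compare your argument against. Your reconstruction (weighted a priori boundary Schauder estimate via freezing coefficients, interior intermediate estimates from Gilbarg--H\"ormander, method of continuity from the model pair $(\Delta,\partial_\nu-\mathrm{id})$, maximum-principle uniqueness from $p_0\le 0$, $m_0<0$, and approximation for part (b)) is the standard architecture that the cited reference actually follows, and your identification of the $b<2$ little-H\"older regime as the delicate point is accurate.
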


\subsection{The $C^{1,\sigma}$ Regularity of Ricci Curvature}
For a $C^2$ conformally compact Einstein metric $g=\rho^2g_+,$  $\rho\in C^{2,\sigma},$  we know that  $Ric\in C^0(\overline{M})$ in the initial smooth y-coordinates. We observe that from (\ref{2.2})
$$\rho Ric=-(n-1)D^2\rho+[\frac{n(|\nabla\rho|^2-1)}{\rho}-\Delta\rho]g=Q(\partial g,\partial^2\rho)\in C^\sigma(\overline{M},\{y\}).$$
Now we compute the metric and curvature in harmonic coordinates $\{x^\beta\}_{\beta=0}^3.$ As $g$ is $C^2,$ $x\in C^{2,\lambda}(y), \forall \lambda\in(0,1).$ Then in x-coordinates, we have that
\begin{equation}\label{4.2}
Ric(\frac{\partial}{\partial x^\alpha},\frac{\partial}{\partial x^\beta})=\frac{\partial y^\gamma}{\partial x^\alpha}\frac{\partial y^\tau}{\partial x^\alpha}Ric(\frac{\partial}{\partial y^\gamma},\frac{\partial}{\partial y^\tau})\in C^0(\overline{M},\{x\})
\end{equation}
\begin{equation}\label{4.3}
\rho Ric(\frac{\partial}{\partial x^\alpha},\frac{\partial}{\partial x^\beta})=\rho\frac{\partial y^\gamma}{\partial x^\alpha}\frac{\partial y^\tau}{\partial x^\alpha}Ric(\frac{\partial}{\partial y^\gamma},\frac{\partial}{\partial y^\tau})\in C^\sigma(\overline{M},\{x\})
\end{equation}
By lemma 4.4 below, we conclude that  $Ric\in H_\sigma^{(1-\sigma)}(\overline{M}).$
\begin{lemm}\label{lemma 4.4}
Suppose that $f$ is a continuous function on $\overline{M}$ and $\rho f\in C^\sigma(\overline{M}),$ then $f\in H_\sigma^{(1-\sigma)}(\overline{M}).$
\end{lemm}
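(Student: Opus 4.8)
The plan is to reduce the weighted norm to a single two‑point estimate and then to exploit the fact that the product $u:=\rho f$ must \emph{vanish} on the boundary. First I would record two elementary facts about the defining function on the chart: since $\rho\in C^{2}(\overline{M})$ with $\rho>0$ in $M$, $\rho=0$ and $d\rho\neq0$ on $\M$, it is comparable to the boundary distance $d(x)=\mathrm{dist}(x,\M)$, i.e. there is $c>0$ with $\rho(x)\ge c\,d(x)$, and it is Lipschitz, $|\rho(x)-\rho(y)|\le L|x-y|$. The crucial observation is that because $f$ is continuous (hence bounded) on $\overline{M}$ while $\rho=0$ on $\M$, the product $u=\rho f$ vanishes identically on $\M$; combined with $u\in C^{\sigma}(\overline{M})$ this gives the decay $|u(x)|=|u(x)-u(x_{0})|\le[u]_{\sigma}\,d(x)^{\sigma}\le[u]_{\sigma}c^{-\sigma}\rho(x)^{\sigma}$, where $x_{0}\in\M$ is a nearest boundary point and $[u]_\sigma$ is the (finite) $C^\sigma$ seminorm of $u$. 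This decay is exactly what the crude bound $|u|\le|u|_{0}$ discards and is the heart of the argument.

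Next I would unwind the definition of the norm. Since $a+b=\sigma+(1-\sigma)=1$, one has $|f|_{\sigma,\overline{M}}^{(1-\sigma)}=\sup_{\delta}\delta\,|f|_{\sigma,\Omega_{\delta}}$, and this is finite provided (i) $\sup_{x}d(x)\,|f(x)|<\infty$ and (ii) the weighted seminorm $K:=\sup_{x,y}\min(d(x),d(y))\,\frac{|f(x)-f(y)|}{|x-y|^{\sigma}}$ is finite. Indeed, for $x,y\in\Omega_{\delta}$ one has $\delta<\min(d(x),d(y))$, so $\delta\,[f]_{\sigma,\Omega_{\delta}}\le K$ and $\delta\sup_{\Omega_{\delta}}|f|\le\sup_{x}d(x)|f(x)|$. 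Estimate (i) is immediate: $d(x)|f(x)|=d(x)\,|u(x)|/\rho(x)\le(\rho(x)/c)\,|u|_{0}/\rho(x)=|u|_{0}/c$. So everything reduces to bounding $K$, where I may relabel so that $\rho(y)\le\rho(x)$, whence $\min(d(x),d(y))$ is comparable to $\rho(y)$.

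For the seminorm I would use the algebraic splitting
\[
f(x)-f(y)=\frac{u(x)-u(y)}{\rho(y)}+u(x)\,\frac{\rho(y)-\rho(x)}{\rho(x)\rho(y)}.
\]
The first term, multiplied by $\rho(y)/|x-y|^{\sigma}$, reduces to $\frac{u(x)-u(y)}{|x-y|^{\sigma}}$ and is bounded by $[u]_{\sigma}$, the factor $\rho(y)$ cancelling. For the second term the same multiplication gives $|u(x)|\,|\rho(x)-\rho(y)|/(\rho(x)|x-y|^{\sigma})$, and here I distinguish two regimes. If $|x-y|\le\rho(x)$, the Lipschitz bound on $\rho$ together with the decay $|u(x)|\le[u]_{\sigma}c^{-\sigma}\rho(x)^{\sigma}$ yields a bound $\le C[u]_{\sigma}(|x-y|/\rho(x))^{1-\sigma}\le C[u]_{\sigma}$. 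If $|x-y|>\rho(x)$, I discard the splitting and bound $f$ pointwise through $|f(x)|\le C\rho(x)^{\sigma-1}$; since $\rho(y)\le\rho(x)$ and $\sigma-1<0$ this gives $|f(x)-f(y)|\le 2C\rho(y)^{\sigma-1}$, so $\min(d(x),d(y))\frac{|f(x)-f(y)|}{|x-y|^{\sigma}}\le 2C(\rho(y)/|x-y|)^{\sigma}\le 2C$. Collecting the two cases bounds $K$ and hence proves $f\in H_{\sigma}^{(1-\sigma)}(\overline{M})$.

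The main obstacle — and the only genuinely non‑formal point — is the \emph{near‑boundary} regime, where $\rho^{-1}$ is singular and $|x-y|$ may be comparable to $\rho$. A crude estimate $|u|\le|u|_{0}$ fails there, and even the multiplication property of the weighted spaces (Property 4) is too lossy, since $\rho^{-1}$ fails to lie in $H_{\sigma}^{(b')}$ for any weight $b'$ that the product rule makes available. The estimate closes only once the pointwise decay $|u(x)|\lesssim d(x)^{\sigma}$ coming from $u|_{\M}=0$ is fed into the singular factor $\rho^{-1}$ \emph{inside} the difference quotient, which is precisely why the two‑point, case‑split argument rather than a product‑rule argument is required.
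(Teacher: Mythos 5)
Your proof is correct, but it takes a genuinely different and considerably longer route than the paper's. You divide by $\rho$, work with $f=u/\rho$ directly, and therefore need the boundary vanishing $u|_{\M}=0$, the resulting pointwise decay $|u(x)|\le [u]_\sigma\,d(x)^\sigma$, and a case split on whether $|x-y|\le\rho(x)$. The paper instead never divides: it writes
$$\rho(x)\bigl(f(x)-f(y)\bigr)=\bigl(\rho(x)f(x)-\rho(y)f(y)\bigr)-\bigl(\rho(x)-\rho(y)\bigr)f(y),$$
so that
$$\rho(x)\,\frac{|f(x)-f(y)|}{|x-y|^{\sigma}}\le [\rho f]_{\sigma}+|f|_{0}\,[\rho]_{\sigma},$$
and then uses $\rho(x)>\delta$ on $M_\delta$ together with the comparability $\Omega_\delta\subset M_{\delta/2}\subset\Omega_{\delta/4}$ to conclude $\sup_\delta\delta\,|f|_{\sigma,\Omega_\delta}<\infty$ in one line; only the boundedness of $f$ and the H\"older continuity of $\rho$ enter. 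This means your closing assessment is off the mark: the decay $|u|\lesssim d^\sigma$ and the two-point case analysis are not \emph{required} to close the estimate --- the multiplicative splitting sidesteps the singularity of $\rho^{-1}$ entirely. What your argument buys in exchange is the extra pointwise information $|f(x)|\lesssim d(x)^{\sigma-1}$, which the lemma does not need. One small economy you missed even within your own scheme: your estimate (i), $\sup_x d(x)|f(x)|<\infty$, follows trivially from the boundedness of $f$ on the compact $\overline{M}$ without invoking $u$ at all.
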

\begin{proof}
As $|\nabla\rho|\equiv 1$ on $\M,$ we can assume that $\frac{1}{2}\leq|\nabla\rho|\leq 2$ on $\M\times[0,\epsilon)$ for a small $\epsilon>0.$
Let $$\Omega_\delta=\{x\in M|dist(x,\partial M>\delta\},\ \ \ M_\delta=\{x\in M|\rho(x)>\delta\}.$$
A direct calculation shows that $$\Omega_\delta\subset M_{\frac{\delta}{2}}\subset\Omega_{\frac{\delta}{4}}$$
So we don't distinguish $\Omega_\delta$ and $M_\delta$ when studying the definition of $|u|_{a,\Omega}^{(b)}.$
Since $\rho f\in C^\sigma(\overline{M}),$ for any $x,y\in M_\delta,$
$$\frac{|\rho(x)f(x)-\rho(y)f(y)|}{d^\sigma(x,y)}\leq C.$$
Then
$$C\geq\frac{|\rho(x)f(x)-\rho(x)f(y)+\rho(x)f(y)-\rho(y)f(y)|}{d^\sigma(x,y)}\geq\rho(x)\frac{|f(x)-f(y)|}{d^\sigma(x,y)}-|f(y)|\frac{|\rho(x)-\rho(y)|}{d^\sigma(x,y)},$$
which means
$$\rho(x)\frac{|f(x)-f(y)|}{d^\sigma(x,y)}\leq C+|f|_{0,M_\delta}|\rho|_\delta.$$
By assumption, $f$ is continuous, in particular, $f$ is bounded. As a consequence, $\delta|f|_{\sigma,M_\delta}<C'$ for any $\delta>0.$
This proves the lemma.
\end{proof}
\begin{lemm}\label{lemma 4.5}
In harmonic coordinates, $g\in H_{2+\sigma}^{(-1-\sigma)}(\overline{M}).$
\end{lemm}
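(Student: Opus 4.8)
The plan is to read off an elliptic equation for the metric components in harmonic coordinates and apply the intermediate Schauder regularity result (Lemma \ref{lemma 4.2}) with parameters $a=2+\sigma$ and $b=1+\sigma$, so that the conclusion $g\in H_a^{(-b)}=H_{2+\sigma}^{(-1-\sigma)}$ is exactly what Lemma \ref{lemma 4.2} produces. The argument is local: for each $p\in\partial M$ fix a harmonic chart $\{x^\beta\}$ on a neighborhood $V$ with $\partial M\cap V=\{x^0=0\}$, and let $\Omega$ be a smoothly bounded half-ball type domain whose boundary contains the flat portion $D=V\cap\partial M$. There the equation
$$\Delta_g g_{ij}=-2R_{ij}+Q(g,\partial g),\qquad \Delta_g=g^{\alpha\beta}\partial_\alpha\partial_\beta,$$
holds, with $Q$ a polynomial in $g$ and $\partial g$. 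In the interior $g$ is smooth (since $g_+$ is Einstein and $\rho$ is smooth on $M$), so $g_{ij}\in C^\infty(\Omega)\cap C^{1,\lambda}(\overline{\Omega})$, in particular $g_{ij}\in C^0(\overline{\Omega})\cap C^2(\Omega)$; and on the flat boundary portion the Dirichlet values are $g_{ij}=h_{ij}\in C^{m,\alpha}\subset H_{1+\sigma}(\partial M)$ because $m\geq 2$.

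The key input is that the source lies in the correct weighted space, namely $f_{ij}:=-2R_{ij}+Q(g,\partial g)\in H_\sigma^{(1-\sigma)}(\Omega)$, which is precisely $H_{a-2}^{(2-b)}(\Omega)$. The Ricci part is handled by the preceding discussion: from (\ref{4.3}) together with Lemma \ref{lemma 4.4} one has $Ric\in H_\sigma^{(1-\sigma)}(\overline{M})$. For the quadratic term, since $x\in C^{2,\lambda}(y)$ gives $g_{\alpha\beta}\in C^{1,\lambda}(\overline{M})$ for every $\lambda$, in particular $g$ and $g^{-1}$ lie in $C^{1,\sigma}$ and $\partial g\in C^\sigma$, so $Q(g,\partial g)\in C^\sigma(\overline{M})=H_\sigma^{(-\sigma)}(\overline{M})\subset H_\sigma^{(1-\sigma)}(\overline{M})$ by basic property 2 (as $1-\sigma\geq-\sigma$). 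Hence $f_{ij}\in H_\sigma^{(1-\sigma)}(\Omega)$.

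It remains to verify that the operator $\Delta_g=g^{\alpha\beta}\partial_\alpha\partial_\beta$ meets the hypotheses of Lemma \ref{lemma 4.2} for $a=2+\sigma$, $b=1+\sigma$; here $a,b$ are non-integer, $0<b\leq a$, $a>2$, and the flat boundary is smooth so $b\leq\gamma$. The principal coefficients $g^{\alpha\beta}$ are positive definite (the metric being Riemannian) and lie in $C^{1,\lambda}$, hence in $H_0(\Omega)$, in $H_\sigma^{(1-\sigma)}(\Omega)=H_{a-2}^{(2-b)}(\Omega)$, and in $H_\sigma^{(0-0)}(\Omega)$ as required when $b<|\alpha|=2$ (a globally H\"older function satisfies $\delta^\sigma|u|_{\sigma,\Omega_\delta}\to0$); there is no lower-order term, so $p_0=0\leq0$. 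Thus all hypotheses of Lemma \ref{lemma 4.2} are satisfied, and it yields $g_{ij}\in H_{2+\sigma}^{(-1-\sigma)}(\Omega)$ on each boundary chart. Combining the finitely many boundary charts with the interior smoothness of $g$ gives $g\in H_{2+\sigma}^{(-1-\sigma)}(\overline{M})$.

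The main obstacle is the limited regularity of the coefficients: because $g$ is only $C^2$, the coefficients $g^{\alpha\beta}$ are only $C^{1,\lambda}$ up to the boundary, so classical Schauder theory cannot be invoked directly, and the entire point is that the weighted intermediate theory tolerates top-order coefficients that are merely continuous (indeed only globally H\"older) while still delivering the sharp weighted estimate. The delicate checks are that $Ric$, a priori only continuous, actually lands in $H_\sigma^{(1-\sigma)}$ rather than a worse weighted space, and that the weight exponents match throughout, namely $2-b=1-\sigma$ on the source and $-b=-1-\sigma$ on the solution.
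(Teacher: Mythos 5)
Your proposal is correct and follows exactly the route of the paper's (much terser) proof: write $\Delta g_{\alpha\beta}=-2R_{\alpha\beta}+Q(g,\partial g)$ in harmonic charts, take $a=2+\sigma$, $b=1+\sigma$, and apply Lemma \ref{lemma 4.2}; you simply supply the verifications (source term in $H_\sigma^{(1-\sigma)}$ via Lemma \ref{lemma 4.4}, coefficient and boundary-trace hypotheses) that the paper leaves implicit. The only cosmetic remark is that the argument should be stated for all components $g_{\alpha\beta}$, with the boundary data for the $g_{0\beta}$ components being their $C^{1,\lambda}$ traces (still in $H_{1+\sigma}$) rather than $h_{ij}$.
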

\begin{proof}
In harmonic charts,
$$\Delta g_{\alpha\beta}=-2R_{\alpha\beta}+Q(g,\partial g)$$
Let $a=2+\sigma,\ b=1+\sigma,$ then according to lemma \ref{lemma 4.2}, $g_{\alpha\beta}\in H_{2+\sigma}^{(-1-\sigma)}(\overline{M}).$
\end{proof}
Now we have that $g\in H_{2+\sigma}^{(-1-\sigma)}(\overline{M}),$ so the curvature $Rm\in H_\sigma^{(1-\sigma)}(\overline{M}).$ By linear transformation of tensor in coordinate system (similar to (\ref{4.2})), $Rm$ is still continuous in x-coordinates. Recall that $\mathcal{Q}$ in (\ref{2.6}) is the quadratic term of curvature, then $\mathcal{Q}\in H_\sigma^{(1-\sigma)}(\overline{M})$ from the basic property 4 in section 4.1.
\par As $g\in C^2(\overline{M},y),$ \ref{3.2}),(\ref{3.3}) hold in y-coordinates on $\M.$ In the harmonic coordinates $\{x^\beta\}_{\beta=0}^3,$
$$\frac{\partial x^\alpha}{\partial y^i}|_{\M}=\frac{\partial x^\alpha|_{\M}}{\partial y^i}=\delta_i^\alpha.$$
Then on $\M,$
$$Ric(\frac{\partial}{\partial y^i},\frac{\partial}{\partial y^j})=\frac{\partial x^\gamma}{\partial y^i}\frac{\partial x^\tau}{\partial y^j}Ric(\frac{\partial}{\partial x^\gamma},\frac{\partial}{\partial x^\tau})=\delta_i^\gamma\delta_j^\tau Ric(\frac{\partial}{\partial x^\gamma},\frac{\partial}{\partial x^\tau})=Ric(\frac{\partial}{\partial x^i},\frac{\partial}{\partial x^j}),$$
 $$Ric(\frac{\partial}{\partial x^0},\frac{\partial}{\partial x^\alpha})=\frac{\partial y^\gamma}{\partial x^0}\frac{\partial y^\tau}{\partial x^\alpha}Ric(\frac{\partial}{\partial y^\gamma},\frac{\partial}{\partial y^\tau}).$$
For any $p\in \M,$ consider the $C^{2,\lambda}$ harmonic chart $(V,\{x^\theta\}_{\theta=0}^3)$ around $p.$ Let $D=V\cap\M$ be the boundary portion. Then the Bach equation (\ref{2.6}) could be written as
\begin{equation}\label{4.4}
\Delta Ric_{\alpha\beta}=\frac{\partial}{\partial y^\theta}f_{\alpha\beta}^\theta+\mathcal{Q}.
\end{equation}
Here $f_{\alpha\beta}^\theta=\Gamma\ast Ric\in H_{\sigma}^{(1-\sigma)}(\overline{M})$ , $\theta=0,1,2,3.$ We will firstly deal with the $R_{ij}$ term where $1\leq i,j\leq 3.$ Consider the following equations:
 \begin{equation}\label{4.5}
 \left\{\begin{array}
     {l}
         \Delta u_{ij}^0=f_{ij}^0 \ in \ V
    \\ \frac{\partial}{\partial y^0}u_{ij}^0=0 \ on \ D\\
    \Delta u_{ij}^k=f_{ij}^k \ in \ V
    \\ u_{ij}^k=0 \ \ \ \ \ \ on \ D
     \end{array}\right.
 \end{equation}
where $k=1,2,3.$ By lemma \ref{lemma 4.1} and lemma \ref{lemma 4.3}, the 4 equations above have solutions in $H_{2+\sigma}^{(-1-\sigma)}(V).$ Let $\tilde{R}_{ij}=R_{ij}-\partial_\theta u_{ij}^\theta,$ then
$$\Delta\tilde{R}ic_{ij}=\mathcal{Q}+Q(g,\partial g,\partial^2g,\partial u,\partial^2u)\in H_{\sigma}^{(1-\sigma)}(V)$$
From lemma \ref{lemma 4.2} and the boundary conditions of $R_{ij},$ we have that $\tilde{R}_{ij}\in H_{2+\sigma}^{(-1-\sigma)}(V),$ which means that  $\tilde{R}_{ij}\in C^{1,\sigma}(V)$ and $R_{ij}\in C^\sigma(V).$ We could also prove that   $\tilde{R}_{00}\in C^{1,\sigma}(V)$ and $R_{00}\in C^\sigma(V)$ in the same way.
\par To study the regularity of $R_{0i},\ i=1,2,3,$ we need to consider the following 12 equations:
 \begin{equation}\label{4.6}
\left\{\begin{array}
     {l}
    \Delta u_{0i}^\theta=f_{0i}^\theta \ \ \ in \ V\\
     N(u_{0i}^\theta)=-(g^{00})^{-\frac{1}{2}}g^{j\theta}R_{ji}+P^\theta_i(\partial g) \ \ \ on \ D
     \end{array}\right.
 \end{equation}
 Here $\theta=0,1,2,3$ and $P^\theta_i(\partial g)$ is a polynomial of $g$ and $g^{-1}$ to be determined, hence in $C^\sigma(D).$
 Lemma \ref{lemma 4.3} tells us that these 12 equations have solutions $u_{0i}^\theta\in H_{2+\sigma,}^{(-1-\sigma)}(\overline{M}).$ Now let  $\tilde{R}_{0i}=R_{0i}-\partial_\theta u_{0i}^\theta,$then
$$\Delta\tilde{R}ic_{0i}=\mathcal{Q}+Q(g,\partial g,\partial^2g,\partial u,\partial^2u)\in H_{\sigma}^{(1-\sigma)}(V).$$
We recall the Neumann boundary condition  (\ref{3.5}):
$$N(R_{0i})=(g^{00})^{-\frac{1}{2}}(-g^{j\beta}\partial_\beta R_{ji}+g^{\eta\beta}\Gamma_{i\beta}^\tau R_{\eta\tau}).$$
Then $$\begin{aligned}
N(\tilde{R}_{0i})&=N(R_{0i}-\partial_\theta u_{0i}^\theta)=N(R_{0i})-N(\partial_\theta u_{0i}^\theta)\\
                 &=(g^{00})^{-\frac{1}{2}}(-g^{j\beta}\partial_\beta R_{ji}+g^{\eta\beta}\Gamma_{i\beta}^\tau R_{\eta\tau})+\partial_\theta((g^{00})^{-\frac{1}{2}}g^{j\theta}R_{ji}-P^\theta_i(\partial g))+Q(\partial g,\partial u)\\
                 &=(g^{00})^{-\frac{1}{2}}g^{\eta\beta}\Gamma_{i\beta}^\tau R_{\eta\tau}-\partial_\theta P^\theta_i(\partial g)+Q(\partial g,\partial u,R_{ij}).
\end{aligned}$$
So if we select some good polynomial $P^\theta_i(\partial g),$ we could make that there is no second derivative of metric $g$ in $(g^{00})^{-\frac{1}{2}}g^{\eta\beta}\Gamma_{i\beta}^\tau R_{\eta\tau}-\partial_\theta P^\theta_i(\partial g).$ In other words,
$$N(\tilde{R}_{0i})=Q(\partial g,\partial u,R_{ij})\in C^{\sigma}(D)$$
We again use lemma \ref{lemma 4.3} to conclude that  $\tilde{R}_{0i}\in H_{2+\sigma}^{(-1-\sigma)}(V).$ So $\tilde{R}_{0i}\in C^{1,\sigma}(V)$ and $R_{0i}\in C^\sigma(V).$
\par Now we have proved that $R_{\alpha\beta}\in C^\sigma(V)$ for all $0 \leq\alpha,\beta\leq 3,$ so $f_{\alpha\beta}^\theta=\Gamma\ast Ric\in C^\sigma(V).$ Then the solutions of equation \ref{4.5} and \ref{4.6} $u_{\alpha\beta}^\theta$ are in $C^{2,\sigma}(V).$ Finally, we get that $R_{\alpha\beta}\in C^{1,\sigma}(V)$ by the same method above.
\par Thus we have finished the first step of the proof, i.e. $Ric\in C^{1,\sigma}(\overline{M})$ in harmonic charts.
\subsection{The $C^{m,\alpha}$ Regularity of Metric in Harmonic Charts}
We have already shown that $g_{\alpha\beta}\in C^{1,\lambda}$ for any $\lambda\in(0,1)$ in harmonic charts, then
\begin{equation} \label{4.7}
\Delta g_{\alpha\beta}=-2R_{\alpha\beta}+Q(g,\partial g)
\end{equation}
If $1\leq i,j\leq 3,$ we have the boundary conditions
$$g_{ij}=h_{ij},$$
So $g_{ij}\in C^{2,\lambda}.$
\par Let $A_{ij}$ be the second fundamental form,
$$A_{ij}=\frac{1}{2}(g^{00})^{\frac{1}{2}}g^{0\beta}(\partial_\beta g_{ij}-\partial_i g_{\beta j}-\partial_j g_{\beta i}).$$
Since $Ric\in C^{1,\sigma}(\overline{M}),$ according to (\ref{3.23}), $A_{ij}\in C^{2,\sigma}(\M).$ Combining it with that $g_{ij}\in C^{2,\lambda}(\overline{M}),$
\begin{equation} \label{4.8}
\partial_j g_{i0}+\partial_i g_{j0}\in C^{2,\sigma}(\M)
\end{equation}
Recall the boundary condition (\ref{3.24})
$$g^{\eta\beta}\partial_\eta (g_{\alpha\beta}-\frac{1}{2}\partial_\alpha g_{\eta\beta})=0.$$
Let $\alpha=0$, and with (\ref{4.8}) we conclude that
\begin{equation}\label{4.9}
(g^{j0}\partial_j+\frac{1}{2}g^{00}\partial_0)g_{00}\in C^{2,\sigma}(\M)
\end{equation}
So $g_{00}\in C^{2,\lambda}(\overline{M}).$
\par Let $\alpha=i$ in (\ref{3.24}), and with (\ref{4.8}) we get that
\begin{equation}\label{4.10}
(g^{j0}\partial_j+\frac{1}{2}g^{00}\partial_0)g_{i0}\in C^{2,\sigma}(\M)
\end{equation}
So $g_{i0}\in C^{2,\lambda}(\overline{M}).$
Now we have proved that $g$ is $C^{2,\lambda}$ in harmonic charts. Hence $\{x^\theta\}_{\theta=0}^3$ form a $C^{3,\lambda}$ differential structure of $\overline{M}.$ Repeat the steps above, we could improve the regularity of metric $g$ gradually, and finally $g\in C^{m,\alpha}(\overline{M},x).$ Hence  $\{x^\theta\}_{\theta=0}^3$ form a $C^{m+1,\alpha}$ differential structure of $\overline{M}.$

\subsection{Regularity of the Defining Function}

\par We already show that $\rho\in C^{2,\sigma}(\overline{M})$ and $\rho$ is smooth in interior. Then the only thing is to study the boundary regularity of the defining function. For any $p\in\M,$ take the harmonic chart $(V,x)$ of $p$ and let $D=V\cap\M,$ We could also assume that $g_{\alpha\alpha}=1, g_{ij}=g_{02}=g_{03}=\cdots=g_{0n}=0 (i\neq j), g_{01}=-\delta$ at $p$ where $\delta\in(0,1)$ is sufficiently close to 1. according to (\ref{2.2}) and (\ref{2.3})
$$Ric-\frac{Sg}{n+1}=-(n-1)\frac{D^2\rho}{\rho}+\frac{n-1}{n+1}\frac{\Delta\rho}{\rho}g.$$
Locally, when acting on $(\frac{\partial}{\partial x^0},\frac{\partial}{\partial x^1})$,
\begin{equation}\label{4.11}
\Delta\rho-(n+1)\cdot g_{01}^{-1}\cdot D^2\rho(\frac{\partial}{\partial x^0},\frac{\partial}{\partial x^1})=\frac{n+1}{n-1}\cdot g_{01}^{-1}\cdot\rho(Ric_{01}-\frac{Sg_{01}}{n+1})
\end{equation}
If $1-\delta$ is small enough, then the left side of the formula above is a elliptic operator around $p.$ Since $\rho|_D\equiv 0,$  $\rho\in C^{m,\alpha}(x).$
\\ In order to improve the $C^{m+1,\alpha}$ regularity of $\rho,$ we need that $\rho(Ric_{01})$ in (\ref{4.11}) is at least $C^{m-1,\alpha}.$ Actually,
$$\Delta(\rho Ric)=\rho\Delta(Ric)+Ric\Delta\rho+2g(\nabla\rho,\nabla Ric)$$
The right side of this formula is $C^{m-3,\alpha}$ with the help of Bach equation. $\rho Ric|_{\partial M}\equiv 0,$ so $\rho(Ric_{01})\in C^{m-1,\alpha},$ and the defining function $\rho$ is $C^{m+1,\alpha}.$

\bibliographystyle{plain}%

\bibliography{bibfile}

\noindent{Xiaoshang Jin}\\
  Beijing International Center for Mathematical Research, Peking University, Beijing, P.R. China. 100871
 \\Email address:{xsjin@bicmr.pku.edu.cn}

\end{document}